\numberwithin{equation}{section} 
\newtheorem{thm}{Theorem}[section]
\newtheorem{cor}[thm]{Corollary}
\newtheorem{prop}[thm]{Proposition}
\theoremstyle{definition}
\newtheorem{definition}[thm]{Definition}
\theoremstyle{remark}
\newtheorem{rem}[thm]{Remark}
\numberwithin{equation}{section}
\newcommand{\bil}[1]{\left\langle #1 \right\rangle}
\newcommand{\Hi}[1]{\mathrm{H}^{#1}}
\newcommand{\Tr}{\mathrm{Tr} }
\newcommand{\derham}[2]{H^{#1} \! \left(#2\right)}
\newcommand{\PP}{\mathbb{P}}
\newcommand{\FF}{\mathbb{F}}
\newcommand{\QQ}{\mathbb{Q}}
\newcommand{\symp}[2]{\mathrm{Sp(#1,#2)}}
\newcommand{\Arf}[1]{\mathrm{Arf}\left(#1\right)}
\newcommand{\Jac}[1]{\mathrm{Jac}\left( #1 \right)}
\begin{document}

\title[Arithmetic and topology]{Arithmetic and topology of classical structures associated to plane quartics}%
\author{Olof Bergvall}%
\address{Department of Electronics, Mathematics and Natural Sciences, University of Gävle, Kungsbäcksvägen 47,
80176 Gävle, Sweden}
\email{olof.bergvall@hig.se}

\begin{abstract}
We consider moduli spaces of plane quartics marked with various structures such as
Cayley octads, Aronhold heptads, Steiner complexes and Göpel subsets and determine their
cohomology. This answers a series of questions of Jesse Wolfson.
We also explore some arithmetic applications over finite fields.
\end{abstract}

\maketitle

\section{Introduction}
The systematic study of plane quartics goes back at least to the beginning
of the 19'th century.
The most famous result in this area is probably the fact that
every plane quartic has $28$ bitangents but there
are many structures other than bitangents which one can attach to 
a plane quartic.
Many of these structures have been well studied over the centuries
and include Cayley octads, Aronhold heptads, Steiner complexes and Göpel subsets.

For each of these structures, there is a corresponding moduli space
of plane quartics marked with this structure, e.g. the moduli space
$Q_{\mathrm{CO}}$ of plane quartics marked with a Cayley octad
and the moduli space $Q_{\mathrm{AH}}$ of plane quartics marked with an Aronhold heptad.
In \cite{owr}, Jesse Wolfson proposes the problem of computing the cohomology of
these spaces. The purpose of this note is to explain how these results
can be derived from our earlier works \cite{bergvalllic, bergvallthesis, bergvall_gd, bergvall_pts} 
(see also \cite{bergstrombergvall, bergvall_glp, bergvalltor, bergvallgounelas}). 
From a more modern perspective, these classical structures are naturally understood
in terms of level structures and subgroups of the symplectic group
$\symp{6}{\FF_2}$. In almost all cases (the case of Aronhold heptads is the exception), the corresponding subgroup
is maximal. Even though all of the structures associated to maximal subgroups of $\symp{6}{\FF_2}$
have been studied classically, two of these structures are considerably more
obscure; namely those associated to the maximal subgroups of $\symp{6}{\FF_2}$
of index $120$ respectively $960$. In fact, one of these structures does not even seem to have
been given a name before. We propose the name ``Riemann-Dickson system'' for
this structure since it seems to first have been discovered by Riemann (in a slightly different
setting) and brought into its geometric form by Dickson.

Plane quartics are closely related to both Del Pezzo surfaces of degree 2
and maximally nodal double Veronese cones, see \cite{ahmadinezhadetal},
and the above structures all have counterparts in these settings. 
Thus, most results in this note have counterparts in terms of Del Pezzo surfaces
and Veronese cones (e.g. refinements of results in \cite{banwaitetal} and \cite{loghrantrepalin}
in the sense of \cite{das}).
However, the precise nature of these structures and results are more subtle than
one might naively expect and we have therefore chosen to present these results elsewhere
in order not to obscure the explicit nature of the present paper.
It would also be very interesting to investigate the relationship between the approach
of the present paper with other recent developments in the theory of plane quartics 
and their bitangents; e.g. the tropical counts of Baker, Len, Morrison, Pflueger 
and Ren \cite{bakerlenmorrisonpfluegerren} and the signed counts of
Larson and Vogt \cite{larsonvogt}. 

We now outline the structure of the paper.
In Section~\ref{classsec} we recall the definitions of various structures related
to plane quartics and their bitangents, e.g. Cayley octads, Aronhold heptads and Steiner complexes. We also describe the subgroups of $\symp{6}{\FF_2}$
stabilizing these structures and give references to the classical literature,
especially in the more obscure cases (however, 
we make no claim to even closely exhaust the immense literature on
the topic).
In Section~\ref{cohsec} we compute the de Rham cohomology groups of the
moduli spaces of plane quartics marked with the various additional structures
considered in Section~\ref{classsec} and we count the number of isomorphism
classes of curves with such structures over finite fields of odd characteristic.

The paper contains more than a few moduli spaces. For convenience, we
provide a table listing the most important ones below.
We also list the most central results about these moduli spaces.

\begin{center}
{
\renewcommand{\arraystretch}{0.95}
\begin{tabular}{ll}
\multicolumn{2}{c}{\textbf{List of moduli spaces}} \\
\hline
$Q[2]$ & The moduli space of plane quartics with level $2$ structure  \\
$Q_{\mathrm{btg}}$ & The moduli space of plane quartics with a marked bitangent \\
$Q_{\mathrm{CO}}$ & The moduli space of plane quartics with a marked Cayley octad \\
$Q_{\mathrm{AH}}$ & The moduli space of plane quartics with a marked Aronhold heptad \\
$Q_{\mathrm{SC}}$ & The moduli space of plane quartics with a marked Steiner complex \\
$Q_{\mathrm{RD}}$ & The moduli space of plane quartics with a marked Riemann-Dickson system \\
$Q_{\mathrm{GS}}$ & The moduli space of plane quartics with a marked Göpel subset \\
$Q_{\mathrm{syz}}$ & The moduli space of plane quartics with a marked syzygetic tetrad \\
$Q_{\mathrm{azy}}$ & The moduli space of plane quartics with a marked Azygetic triad \\
$Q_{\mathrm{enn}}$ & The moduli space of plane quartics with a marked ennead
\end{tabular}
}
\end{center}

Recall that the Poincar\'e polynomial $P(X,t)$ of a space $X$ encodes the dimensions
of the cohomology groups of $X$. More precisely, the coefficient of $t^i$ is the
dimensions of $\mathrm{H}^i(X)$.

\begin{center}
{
\renewcommand{\arraystretch}{0.95}
\begin{tabular}{lcl}
\multicolumn{3}{c}{\textbf{The Poincar\'e polynomials of the various moduli spaces}} \\
\hline
$P(Q_{\mathrm{btg}},t)$ & = & $1+t^5+2t^6$ \\
$P(Q_{\mathrm{CO}},t)$ & = & $1+t+t^5+4t^6$ \\
$P(Q_{\mathrm{AH}},t)$ & = & $1+t+t^3+4t^4+6t^5+6t^6$ \\
$P(Q_{\mathrm{SC}},t)$ & = & $1+t+2t^5+5t^6$ \\
$P(Q_{\mathrm{RD}},t)$ & =& $1+2t^5+7t^6$ \\
$P(Q_{\mathrm{GS}},t)$ & =& $1+t+2t^5+11t^6$ \\
$P(Q_{\mathrm{syz}},t)$ & = & $1+t+t^4+7t^5+13t^6$ \\
$P(Q_{\mathrm{azy}},t)$ & = & $1+t+t^3+3t^4+8t^5+9t^6$ \\
$P(Q_{\mathrm{enn}},t)$ & = & $1+3t^3+11t^4+13t^5+11t^6$ 
\end{tabular}
}
\end{center}

\begin{center}
{
\renewcommand{\arraystretch}{0.95}
\begin{tabular}{lcl}
\multicolumn{3}{c}{\textbf{The number of points over $\FF_q$ of the various moduli spaces}} \\
\hline
$|Q_{\mathrm{btg}}(\FF_q)|$ & = & $q^6-q+2$ \\
$|Q_{\mathrm{CO}}(\FF_q)|$ & = & $q^6-q^5-q+4$ \\
$|Q_{\mathrm{AH}}(\FF_q)|$ & = & $q^6-q^5-q^3+4q^2-6q+6$ \\
$|Q_{\mathrm{SC}}(\FF_q)|$ & = & $q^6-q^5-2q+5$ \\
$|Q_{\mathrm{RD}}(\FF_q)|$ & =& $q^6-2q+7$ \\
$|Q_{\mathrm{GS}}(\FF_q)|$ & =& $q^6-q^5-2q+11$ \\
$|Q_{\mathrm{syz}}(\FF_q)|$ & = & $q^6-q^5+q^2-7q+13$ \\
$|Q_{\mathrm{azy}}(\FF_q)|$ & = & $q^6-q^5-q^3+3q^2-8q+9$ \\
$|Q_{\mathrm{enn}}(\FF_q)|$ & = & $q^6-3q^3+11q^2-13q+11$ 
\end{tabular}
}
\end{center}

\subsection*{Acknowledgements}
The author thanks Igor Dolgachev for providing some of the references to the classical literature
and Jesse Wolfson for comments and corrections.

\section{Classical constructions}
\label{classsec}
In this section we recall some classical results and constructions related to
plane quartics and their bitangents.

\subsection{Curves with symplectic level two structures}

Let $C$ be a smooth and irreducible projective 
curve of genus $g$ over an algebraically closed field of characteric different from $2$ 
and let $\Jac{C}$ be its Jacobian.
We will only consider group theoretic properties of
$\Jac{C}$ so we make the identifications
\begin{equation*}
 \Jac{C} = \mathrm{Pic}^0(C) = \mathrm{Cl}^0(C).
\end{equation*}
If $D \in \mathrm{Cl}(C)$, we denote the corresponding line bundle
by $\mathcal{L}(D)$ and we use the notation $h^n(D)$ for the dimension
of $\derham{n}{C,\mathcal{L}(D)}$.

Let $\Jac{C}[2]$  denote the $2$-torsion subgroup of $\Jac{C}$.
This group is evidently a vector space over $\FF_2$ and it is well known that its dimension is $2g$.
The Weil pairing $\bil{-,-}$ is a symplectic bilinear form on $\Jac{C}[2]$.

\begin{definition}
 A \emph{symplectic level two structure} on a curve $C$ of genus $g$
 is an isometry $\phi$ from the standard symplectic vector space of dimension $2g$
 to $(\Jac{C}\![2],b_C)$. 
\end{definition}
 
 Equivalently, a symplectic level two structure is
 a choice of an (ordered) symplectic basis $x_1, \ldots,x_g, y_1, \ldots, y_g$ of
 $(\Jac{C}\![2],b_C)$.  We will write $(C,\phi)$ and $(C,x_1, \ldots, x_g,y_1,\ldots,y_g)$
 interchangeably.
 
 Two curves with level two structures
 $(C,x_1, \ldots,y_g)$ and $(C',x'_1, \ldots, y'_g)$ are isomorphic
 if there is an isomorphism of curves $\varphi: C \to C'$ such that
 the induced morphism $\widetilde{\varphi}:\Jac{C'}\![2] \to \Jac{C}\![2]$
 takes one symplectic basis to the other in the sense that 
 \begin{align*}
  \widetilde{\varphi}(x'_i) & =x_i, \quad i=1, \ldots g, \\
  \widetilde{\varphi}(y'_i) & =y_i, \quad i=1, \ldots g.
 \end{align*}

\subsection{Bitangents and odd theta characteristics}

A \emph{plane quartic} is a smooth curve in the projective plane 
given by a degree $4$ polynomial. 
By the genus-degree formula, such a curve has genus $3$. Conversely, every non-hyperelliptic
curve of genus $3$ can be embedded as a plane quartic via its canonical linear system.
A \emph{bitangent} to a plane quartic $C$
is a line $L$ which is tangent to $C$ at two points or has contact order $4$ at one point.
If $L$ intersects $C$ at two distinct points we say that $L$ is a \emph{genuine bitangent}
and if $L$ only intersects $C$ at one point we say that it is a \emph{hyperflex line}.

There are many classical constructions and results centered around plane quartics and
their bitangents. The most famous is the following.

\begin{thm}[Pl\"ucker \cite{plucker}, Jacobi \cite{jacobi}]
 Every plane quartic has 28 bitangents.
\end{thm}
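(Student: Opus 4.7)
The plan is to set up a bijection between bitangents of $C$ and odd theta characteristics on $C$, and then to count the latter via the theory of quadratic forms refining the Weil pairing $\bil{-,-}$ on $\Jac{C}[2]$. Recall that a theta characteristic is a line bundle $\mathcal{L}$ with $\mathcal{L}^{\otimes 2} \cong \mathcal{L}(K_C)$, called odd when $h^0(\mathcal{L})$ is odd.

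First I would use that $C$, being a smooth plane quartic, is non-hyperelliptic of genus $3$ and canonically embedded, so $K_C = \mathcal{O}_C(1)$. For a bitangent $L$, the intersection divisor $L \cdot C$ has degree $4$ and, by the definition of a bitangent, takes the form $2D$ for an effective divisor $D$ of degree $2$ (namely $D = p_1 + p_2$ in the genuine case and $D = 2p$ in the hyperflex case). Thus $2D \sim K_C$, so $\mathcal{L}(D)$ is a theta characteristic, and it is odd because $h^0(D) \geq 1$. Conversely, given an odd theta characteristic $\vartheta$, non-hyperellipticity of $C$ forces $h^0(\vartheta) = 1$, so there is a unique effective $D \sim \vartheta$ of degree $2$; the relation $2D \sim \mathcal{O}_C(1)$ then exhibits $2D$ as cut out by a unique line $L$, which meets $C$ only with even multiplicities and is therefore a bitangent. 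The two constructions are mutually inverse.

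To count odd theta characteristics, I would invoke that they form a torsor under $\Jac{C}[2]$ and that the parity $\vartheta \mapsto h^0(\vartheta) \bmod 2$ extends, after fixing a basepoint, to a quadratic form on $\Jac{C}[2]$ refining the Weil pairing. A standard symplectic linear algebra count over $\FF_2$ shows that on a symplectic $\FF_2$-space of dimension $2g$ there are exactly $2^{g-1}(2^g - 1)$ quadratic refinements of odd Arf invariant, yielding $4 \cdot 7 = 28$ when $g = 3$. The main obstacle is knowing $h^0(\vartheta) = 1$ for every odd $\vartheta$ (ruling out $h^0 \geq 3$, so the map is genuinely a bijection rather than merely surjective); this relies on Mumford's theorem that the parity of $h^0$ is deformation-invariant together with the absence of a $g^1_2$ on the non-hyperelliptic plane quartic $C$.
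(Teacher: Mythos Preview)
Your argument is correct and is the standard modern proof. The paper does not actually supply a proof of this theorem; it simply attributes the result to Pl\"ucker and Jacobi and then, in the surrounding text, records exactly the two ingredients you use: the bijection between bitangents and odd theta characteristics via $L\cdot C = 2(P+Q)$, and the count $2^{g-1}(2^g-1)$ of odd theta characteristics on a genus-$g$ curve.

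One small remark on your last paragraph: you do not need Mumford's deformation-invariance of parity to rule out $h^0(\vartheta)\geq 3$. For a degree-$2$ divisor on a curve of genus $\geq 2$, Clifford's inequality already gives $h^0\leq 2$, and $h^0=2$ would exhibit a $g^1_2$, contradicting non-hyperellipticity. So $h^0(\vartheta)\leq 1$ directly, and oddness forces $h^0(\vartheta)=1$. This is purely a pointwise argument on $C$; no deformation is involved.
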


Let $K_C$ denote the canonical class of $C$. A \emph{theta characteristic} on a curve $C$
is a divisor class $\theta$ such that $2\theta = K_C$. 
A theta characteristic $\theta$ is called \emph{odd} (resp. \emph{even}) if the
\emph{Arf invariant} of $\theta$
\begin{equation*}
\mathrm{Arf}(\theta) = h^0(\theta) \mod 2
\end{equation*}
is $1$ (resp. $0$). A curve $C$ of genus $g$ has precisely $2^{2g}$ theta characteristics
of which $2^{g-1}(2^g-1)$ are odd and $2^{g-1}(2^g+1)$ are even. The set $\Theta$ of theta
characteristics of $C$ can be identified with the set $Q(\Jac{C}[2])$ of quadratic
forms on the symplectic vector space $\Jac{C}[2]$ via
\begin{equation*}
 \theta(v) = \Arf{\theta+v}+\Arf{\theta}.
\end{equation*}
We denote the subset of odd theta characteristics by $\Theta^-$ and the subset of even
theta characteristics by $\Theta^+$. 

Since a plane quartic $C$ is canonically embedded
into $\PP^2$, each effective canonical divisor of $C$ is given by intersecting $C$ with a line.
In particular, if $L$ is a bitangent then $L.C$ is a canonical divisor. For a bitangent
$L$ we have $L.C=2P+2Q$ for some points $P$ and $Q$ on $C$ (which may coincide if $L$ is a hyperflex
line) and we see that the class of $P+Q$ is a theta characteristic.
 This construction yields a bijection between the set of bitangents of $C$ and the set of
odd theta characteristics of $C$.

The stabilizer of a bitangent (or, if one prefers, an odd theta characteristic)
is a maximal subgroup of order $51840$. It is isomorphic to the Weyl group $W(E_6)$ of the root system $E_6$.
It has index $28$ in $\mathrm{Sp}(6,\FF_2)$.
The action of $W(E_6)$ is perhaps most easily understood in the world of Del Pezzo surfaces - 
here we can understand $W(E_6)$ as the group permuting the lines on a Del Pezzo surface $S$ of degree
$3$ (i.e. a cubic surface) and the Del Pezzo surface $S$ can in turn be thought of as being obtained by blowing
down a $(-1)$-curve on the Del Pezzo surface of degree $2$ obtained as the double cover of the plane quartic $C$
(see~\ref{aronholdsec} for a few more details on this perspective).

\subsection{Cayley octads and even theta characteristics}
\label{cayleyoctadssec}

There is also a close relationship between even theta characteristics of a non-hyperelliptic genus 
$3$ curve $C$ and bitangents of its canonical model as a plane quartic $Q$.
We explain this relationship below
(for a more complete account, see \cite{grossharris}). 

Let $\theta$ be an even theta characteristic of $C$ and consider
the linear system corresponding to the divisor class $K_C+\theta$. This linear
system is very ample and gives an embedding of $C$ into $\PP^3$ as a curve of degree $6$ which we
denote by $B$.
To be more precise, let $V=\Hi{0}(C,K_C)^*$, let $W=\Hi{0}(C,K_C+\theta)$
and consider the natural map $\varphi: V \to \mathrm{Sym}^2W^*$.
We make the identifications $\PP V = \PP^2$ and $\PP W = \PP^3$ and we think of $\varphi$ as
a net $\mathcal{N}$ (i.e. a linear system of dimension $2$) of quadrics in $\PP^3$.
With these identifications at hand, the quartic $Q$ is the locus 
\begin{equation*}
 Q := \{[v] \in \PP V| \varphi(v) \text{ is singular}\}
\end{equation*}
and the sextic $B \subset \PP W$ is the locus of singular points of members of $\mathcal{N}$. 
Moreover, for every point $P=[v]$ of $Q$ we may consider $\varphi(P)$ to be a map $W \to W^*$.
We let $L$ denote the divisor class corresponding to the dual of the line bundle whose fiber at $P \in Q$ is $\mathrm{ker}(\varphi(P))$. It can then be shown that
\begin{equation*}
 \theta = L-K_C.
\end{equation*}
Somewhat remarkably, the above process can be abstracted in the sense that starting
from two vector spaces $V$ and $W$ of dimensions $2$ and $3$ and a linear map 
$\varphi: V \to \mathrm{Sym}^2W^*$ one obtains a genus $3$ curve with an even
theta characteristic provided that $\varphi$ is sufficiently general,
i.e. provided that the net $\mathcal{N}$ given by $\varphi$ has eight points in general linear position as its
base locus. Not all $8$-tuples of points in $\PP^3$ occur as the base locus of a net
of quadrics but when it happens, the $8$-tuple uniquely determines the net.

\begin{definition}
 A \emph{Cayley octad} is an unordered $8$-tuple of points in $\PP^3$ 
 in general linear position which is the base locus of a net of quadrics.
\end{definition} 

Cayley octads are a special case of self-associated point sets,
see \cite{dolgachevortland}. Note that a plane quartic curve with an even theta characteristic uniquely defines a
Cayley octad and vice versa. In particular, we see that, up to projective equivalence, there are $36$ Cayley octads
associated to each plane quartic.

Let $\Omega=\{Q_1, \ldots, Q_8\}$ be a Cayley octad and let $B$ be the corresponding sextic curve
in $\PP^3$ given by the theta characteristic $\theta$. The $28$ lines $L_{ij}$, $1 \leq i < j \leq 8$ are bisecants to $B$, i.e. they intersect
$B$ in $2$ points each and each of the $28$ lines cuts out an odd theta characteristic on $B$.
If we now recall that the odd theta characteristics are naturally identified with bitangents
of the canonical model of $B$ as a plane quartic we see that $\theta$ can be specified
by labelling the $28$ bitangents with the $28$ pairs of points of a set of $8$ elements (in a compatible way according to the above
constructions). Sometimes Cayley octads are defined as such labellings, see e.g. \cite{conwayetal}.

The stabilizer of a Cayley octad is a maximal subgroup 
of order $40320$ and it is isomorphic to $S_8$, the symmetric
group on $8$ elements. It has index $36$ in $\mathrm{Sp}(6,\FF_2)$. 
Here, the action is plainly seen as the permutation action of $S_8$ on the eight points of 
the Cayley octad.

\subsection{Aronhold heptads}
\label{aronholdsec}
Let $\Omega=\{Q_1, \ldots, Q_8\}$ be a Cayley octad, let $B$ be the corresponding sextic curve
in $\PP^3$ and let $\theta$ be the corresponding even theta characteristic. 
By projecting from one of the points, say $Q_8$, we obtain seven points $P_1, \ldots, P_7$ in $\PP^2$ in general position,
i.e. no $3$ of them lie on a line and no $6$ of them lie on a conic. The image $\tilde{B}$ of $B$ under the projection is
a plane sextic curve with double points at the seven points. The $7$ lines $L_{18}, \ldots, L_{78}$ define
$7$ odd theta characteristics $\theta_1,\ldots, \theta_7$ on $B$. Such a $7$-tuple of odd theta characteristics
is called a \emph{Aronhold heptad}. A more direct definition is the following.

\begin{definition}
 An Aronhold heptad $\eta$ is a $7$-tuple of odd theta characteristics such that if
 $\theta_1, \theta_2$ and $\theta_3$ are distinct elements of $\eta$, then
 \begin{equation*}
  \theta_1+\theta_2+\theta_3
 \end{equation*} 
 is an even theta characteristic.
\end{definition}

Here, the sum should be interpreted as a sum of quadratic forms on $\symp{6}{\FF_2}$, not
as a sum of divisors (the corresponding divisor class is represented by e.g.
$\theta_1+\theta_2-\theta_3$ but the divisorial expressions are not symmetric in the indices).
We remark that we can get the even theta characteristic back from the Aronhold heptad via
\begin{equation*}
 \theta = \sum_{i=1}^7 \theta_i.
\end{equation*}
There are $8$ different points of the Cayley octad to
project from, thus yielding $8$ different Aronhold heptads. We thus see that there are $8 \cdot 36=288$
projectively inequivalent Aronhold heptads associated to a plane quartic.
We also take this opportunity to remark that if we blow up $P_1, \ldots, P_7$ we obtain
a Del Pezzo surface $S$ of degree $2$. We could also obtain $S$ more directly from $B$ by first taking the canonical model $Q$ of $B$ and then
taking the double cover branched along the quartic curve $Q \subset \PP^2$.
Thus, depending on our purposes we may choose to view a non-hyperelliptic genus $3$ curve
as a plane quartic, a space sextic, a plane sextic with $7$ double points or as the fixed
locus of the anticanonical involution of a Del Pezzo surface of degree $2$.

The stabilizer of a Aronhold heptad is a subgroup isomorphic to $S_7$, the symmetric
group on $7$ elements. This subgroup is not maximal (it is contained in the $S_8$ stabilizing the associated Cayley octad). The index of $S_7$ in $\mathrm{Sp}(6,\FF_2)$ is $288$. 
Again, the action is plainly seen as the permutation action of $S_7$ on the seven theta characteristics
 of the Aronhold heptad.

Both Cayley octads and Aronhold heptads on a plane quartic $C$ can be understood in terms of
symplectic level $2$ structures on $C$. To be precise, we have the following.

\begin{prop}
 An ordered Cayley octad uniquely determines a symplectic level $2$ structure and vice versa.
 An ordered Aronhold heptad uniquely determines a symplectic level $2$ structure and vice versa.
\end{prop}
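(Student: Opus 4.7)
The plan is to show that the set of ordered Cayley octads on $C$ and the set of ordered Aronhold heptads on $C$ are both torsors for $\symp{6}{\FF_2}$, matching the torsor structure already carried by the set of symplectic level $2$ structures on $C$; the two bijections then follow by transporting any chosen base point.

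First, I would observe that $\symp{6}{\FF_2}$ acts faithfully on $\Jac{C}\![2]$, hence on the $28$ odd theta characteristics, since differences of odd theta characteristics span $\Jac{C}\![2]$. Combined with the identifications already recalled in Section~\ref{cayleyoctadssec} and Section~\ref{aronholdsec}, this means that the stabilizer of an unordered Cayley octad is $S_8$ acting by the natural permutation representation on the eight labels of the octad, and the stabilizer of an unordered Aronhold heptad is $S_7$ acting by the natural permutation representation on its seven odd theta characteristics.

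The key step is to verify that the induced action of $\symp{6}{\FF_2}$ on ordered octads (resp. heptads) is simply transitive. Transitivity on ordered objects is immediate from transitivity on unordered ones (known, since the stabilizers have the stated indices $36$ and $288$) combined with transitivity of each stabilizer on orderings. Freeness is equally immediate: an element fixing an ordered Cayley octad lies in $S_8$ and pointwise fixes all eight labels, hence is trivial, and the same argument applies to $S_7$ for ordered heptads. The numerical identity
\begin{equation*}
|\symp{6}{\FF_2}| = 1\,451\,520 = 36 \cdot 8! = 288 \cdot 7!
\end{equation*}
then confirms that the single free orbit exhausts the set of ordered structures on $C$, so both are genuine $\symp{6}{\FF_2}$-torsors.

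Two $\symp{6}{\FF_2}$-torsors of the same cardinality are in $\symp{6}{\FF_2}$-equivariant bijection, uniquely determined by any chosen base-point correspondence; pairing a reference ordered Cayley octad (resp. Aronhold heptad) with an explicit symplectic basis of $\Jac{C}\![2]$ extracted from the associated odd theta characteristics $\theta_{ij}$ (resp. the $\theta_i$ together with the even theta characteristic $\theta = \sum_{i=1}^7\theta_i$ recovering the implicit eighth point) gives the bijection in both directions. I do not anticipate a substantive obstacle here: the entire argument reduces to a cardinality and freeness check, and the only real care needed is to confirm that the embedded copies of $S_8$ and $S_7$ act as the natural permutation representations on their underlying index sets, which is exactly what the faithfulness observation delivers.
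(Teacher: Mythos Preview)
The paper does not supply its own argument for this proposition; it simply refers the reader to Dolgachev--Ortland \cite{dolgachevortland}. Your torsor argument is correct and stands on its own: once one accepts (as the paper records) that the stabilizer in $\symp{6}{\FF_2}$ of an unordered Cayley octad, respectively Aronhold heptad, is the full symmetric group acting faithfully by permutations on the eight, respectively seven, constituent objects, freeness and transitivity on ordered structures are immediate, and the numerics $|\symp{6}{\FF_2}|=36\cdot 8!=288\cdot 7!$ confirm that each set is a single free orbit.

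The approach in \cite{dolgachevortland} is different in character. Rather than a counting argument, they write down an explicit recipe: from an ordered Aronhold heptad $\theta_1,\ldots,\theta_7$ one forms a specific symplectic basis of $\Jac{C}[2]$ out of the differences $\theta_i-\theta_j$, and conversely one recovers the heptad from a given basis. This has the advantage of producing a \emph{canonical} bijection, natural in the curve $C$, which is exactly what is needed for the moduli-space identifications $\mathcal{Q}[2]\cong\mathcal{C}_{\mathrm{ord}}\cong\mathcal{P}^2_{7,\mathrm{ord}}$ stated immediately after the proposition. Your torsor argument yields a bijection for each fixed $C$, determined only up to a choice of base point; the passage where you ``extract'' an explicit symplectic basis from the $\theta_{ij}$ is precisely where one would carry out the Dolgachev--Ortland construction, and supplying those formulas is what upgrades your bijection to a natural one. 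So the two approaches are complementary: yours explains \emph{why} a bijection must exist (both sides are $\symp{6}{\FF_2}$-torsors), while theirs exhibits \emph{which} bijection is meant. A minor stylistic point: the phrase ``two $\symp{6}{\FF_2}$-torsors of the same cardinality'' is redundant, since any two $G$-torsors automatically have cardinality $|G|$.
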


For a proof, see \cite{dolgachevortland}. In fact, even more is true. If we let
$\mathcal{Q}[2]$ denote the moduli space of plane quartics with level $2$ structure,
let $\mathcal{C}_{\mathrm{ord}}$ denote the moduli space of ordered Cayley octads and
let $\mathcal{P}^2_{7,\mathrm{ord}}$ denote the moduli space of ordered $7$-tuples of
points in $\PP^2$ we have isomorphisms of moduli spaces
\begin{equation*}
 \mathcal{Q}[2] \cong \mathcal{C}_{\mathrm{ord}} \cong \mathcal{P}^2_{7,\mathrm{ord}}
\end{equation*}

\subsection{Steiner complexes}

Let $Q$ be a plane quartic and let $\mathcal{P}$ be the set of unordered distinct pairs
of odd theta characteristics of $Q$. Consider the map
\begin{equation*}
 s: \mathcal{P} \to \mathrm{Jac}(Q)[2]\setminus \{0\}
\end{equation*}
sending a pair $\{\theta_1, \theta_2\}$ to  $\theta_1+\theta_2$. 
The union of elements in a fiber of $s$ is called a
\emph{Steiner complex}, see \cite{dolgachev}. 

\begin{definition}
\label{steinerdef}
 Let $v \in \mathrm{Jac}(C)[2]$ be a nonzero element. The set 
 \begin{equation*}
  \Sigma(v) = \bigcup_{\{\theta_1,\theta_2\} \in s^{-1}(v)} \{\theta_1, \theta_2\}
 \end{equation*}
 is called the Steiner complex associated to $v$. 
\end{definition}

Thus, there is one Steiner complex for each of the $2^{2 \cdot 3}-1=63$ elements
of $\mathrm{Jac}(Q)[2]$. Each Steiner complex contains precisely $12$ odd theta characteristics
(or $12$ bitangents, if one prefers this viewpoint).

A more direct definition of a Steiner complex is
\begin{equation*}
 \Sigma(v) = \{\theta \in \Theta^-|\theta(v)=0\}. 
\end{equation*}
However, from Definition~\ref{steinerdef} it is clear how the $12$ elements of a Steiner complex
naturally form $6$ pairs. Permuting the 6 pairs stabilizes a Steiner complex
and so does interchanging the two elements of a pair. This suggests that the stabilizer subgroup
of a Steiner complex should be the wreath product $\FF_2 \wr S_6$ (i.e. the semidirect product 
$\FF_2^6 \rtimes S_6$ with the canonical action of $S_6$ on $\FF_2^6$). However, it turns out
(see e.g. \cite{dolgachev} for details) that a nontrivial parity condition must be satisfied
by the $\FF_2^6$-part; once five of the switches are chosen the sixth is determined.
This gives the stabilizer subgroup the structure of a semidirect product $\FF_2^5 \rtimes S_6$.
It is a maximal subgroup of cardinality $23040$ and index $63$. It is also possible to identify the stabilizer subgroup
with the Weyl group of $D_6$, see \cite{dolgachevortland}.

\subsection{Riemann-Dickson coordinates and octonial structures}
The material presented in this section, while classical, is not as well-known
as most of the other structures considered in this paper. For a more complete treatment and further
references, see Chapter XIX, Section 191 of \cite{milleretal} (for classical constructions) and 
\cite{manivel} (for connections to octonions).

Recall that the set $\Theta$ of theta characteristics is naturally a $\Jac{C}[2]$-torsor.
Riemann observed (and later also Weber, Clebsch, Appell and Goursat), in connection with his investigations of theta functions, that 
$\Theta$ is naturally parametrized by matrices of the form
\begin{equation*}
 \left( 
   \begin{array}{ccc}
    x_0 & x_1 & x_2  \\
    y_0 & y_1 & y_2
   \end{array}
 \right)
\end{equation*}
 with entries in $\FF_2$
such that odd (resp. even) theta characteristics correspond to matrices such that
\begin{equation*}
 x_0y_0+x_1y_1+x_2y_2 = 1 \quad (\text{resp. } 0).
\end{equation*}
Moreover, these coordinates can be chosen so that the four bitangents corresponding to
\begin{align*}
  \left( 
   \begin{array}{ccc}
    x_0 & x_1 & x_2  \\
    y_0 & y_1 & y_2
   \end{array}
 \right), & 
 \left( 
   \begin{array}{ccc}
    z_0 & z_1 & z_2  \\
    w_0 & w_1 & w_2
   \end{array}
 \right), \\
 \left( 
   \begin{array}{ccc}
    u_0 & u_1 & u_2  \\
    v_0 & v_1 & v_2
   \end{array}
 \right), & 
  \left( 
   \begin{array}{ccc}
    r_0 & r_1 & r_2  \\
    s_0 & s_1 & s_2
   \end{array}
 \right),
\end{align*}
intersect the curve $C$ in eight points lying on a conic if and only if
\begin{equation*}
 x_i+z_i+u_i+r_i=0, \quad y_i+w_i+v_i+s_i=0, \quad i=0,1,2,
\end{equation*}
(this geometric formulation seems to be due to Dickson, see \cite{milleretal}, p.373). 
We therefore propose the following terminology.

\begin{definition}
 We call a choice of coordinates for $\Theta$ satisfying the above conditions
 a choice of \emph{Riemann-Dickson coordinates}.
\end{definition}

This terminology is not standard but we have found no other in the literature.
There are $120$ possible choices of Riemann-Dickson coordinates for $\Theta$.

There is a quite remarkable connection between Riemann-Dickson coordinates and octonion multiplication
via the following simple construction. The Riemann-Dickson coordinates
\begin{equation*}
 \left( 
   \begin{array}{ccc}
    x_0 & x_1 & x_2  \\
    y_0 & y_1 & y_2
   \end{array}
 \right)
\end{equation*}
naturally correspond to two vectors $x=(x_0,x_1,x_3)$ and $y=(y_0,y_1,y_2)$ in the
vector space $\FF_2^3$. We label the unit octonions $e_0,e_1, \ldots, e_7$ with
elements of $\FF_2^3$ by identifying the vector $x=(x_0,x_1,x_2)$ with the
binary number $x_0x_1x_2$; e.g. $e_{(1,0,1)}=e_5$ and $e_{(0,0,0)}=e_0$.
Then, up to a sign, the formula
\begin{equation*}
 e_x \cdot e_y = e_{x+y}
\end{equation*}
is octonion multiplication. To recover the signs we identify the nonzero elements
of $\FF_2^3$ with the elements of the oriented Fano plane and use Freudenthal's
mnemonic \cite{freudenthal} for octonion multiplication; the elements of each line in the Fano plane are cyclically ordered, if the order of the multiplication is the same as the order of the line we obtain a positive sign, otherwise the sign is negative (and all elements except the neutral element square to minus the neutral element). We then obtain the usual Cayley-Graves table for octonion
multiplication, see Figure~\ref{fanofig} and Table~\ref{cayleygravestable}.

The stabilizer of a system of Riemann-Dickson coordinates is isomorphic to
the group $G_2(2)$ of automorphisms of the integral octonions. It is a maximal subgroup of cardinality $12096$
and index $120$.  

\begin{center}
\begin{figure}

\begin{tikzpicture}
\tikzstyle{point}=[ball color=white, circle, draw=black, inner sep=0.1cm]
\node (v4) at (0,0) [point] {$e_4$};
\draw (0,0) circle (1cm);
\draw[->] ({120*3-8}:1) arc ({120*3-8}:{120*2-8}:1);
\draw[->] ({120*2-8}:1) arc ({120*2-8}:{120-8}:1);
\draw[->] ({120-8}:1) arc ({120-8}:{-8}:1);
\node (v7) at (90:2cm) [point] {$e_7$};
\node (v6) at (210:2cm) [point] {$e_6$};
\node (v5) at (330:2cm) [point] {$e_5$};
\node (v1) at (150:1cm) [point] {$e_1$};
\node (v3) at (270:1cm) [point] {$e_3$};
\node (v2) at (30:1cm) [point] {$e_2$};
\draw[->] (v6) -- (v1);
\draw[->] (v1) -- (v7);
\draw[->] (v7) -- (v2);
\draw[->] (v2) -- (v5);
\draw[->] (v5) -- (v3);
\draw[->] (v3) -- (v6);
\draw[->] (v1) -- (v4);
\draw[->] (v4) -- (v5);
\draw[->] (v2) -- (v4);
\draw[->] (v4) -- (v6);
\draw[->] (v3) -- (v4);
\draw[->] (v4) -- (v7);
\end{tikzpicture}
\caption{The oriented Fano plane.}
\label{fanofig}
\end{figure}
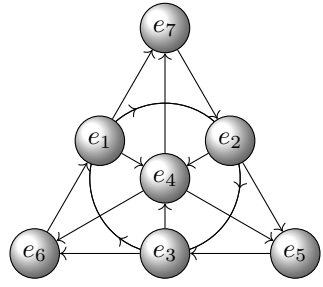
\end{center}

\begin{center}
\begin{table}
\begin{equation*}
\resizebox{\textwidth}{!}{$
 \begin{array}{r|rrrrrrrr}
  \, & e_{(0,0,0)} & e_{(0,0,1)} & e_{(0,1,0)} & e_{(0,1,1)} & e_{(1,0,0)} & e_{(1,0,1)} & e_{(1,1,0)} & e_{(1,1,1)} \\
  \hline
e_{(0,0,0)} & e_{(0,0,0)} & e_{(0,0,1)} & e_{(0,1,0)} & e_{(0,1,1)} & e_{(1,0,0)} & e_{(1,0,1)} & e_{(1,1,0)} & e_{(1,1,1)} \\ 
e_{(0,0,1)} & e_{(0,0,1)} & -e_{(0,0,0)} & e_{(0,1,1)} & -e_{(0,1,0)} &  e_{(1,0,1)} & - e_{(1,0,0)} & - e_{(1,1,1)} & e_{(1,1,0)} \\
e_{(0,1,0)} & e_{(0,1,0)} & - e_{(0,1,1)} & -e_{(0,0,0)} & e_{(0,0,1)} & e_{(1,1,0)} & e_{(1,1,1)} & -e_{(1,0,0)} & -e_{(1,0,1)}\\ 
e_{(0,1,1)} & e_{(0,1,1)} & e_{(0,1,0)} & -e_{(0,0,1)} & -e_{(0,0,0)} & e_{(1,1,1)} & -e_{(1,1,0)} & e_{(1,0,1)} & -e_{(1,0,0)}\\ 
e_{(1,0,0)} & e_{(1,0,0)} & -e_{(1,0,1)} & -e_{(1,1,0)} & -e_{(1,1,1)} & -e_{(0,0,0)} & e_{(0,0,1)} & e_{(0,1,0)} & e_{(0,1,1)}\\ 
e_{(1,0,1)} & e_{(1,0,1)} & e_{(1,0,0)} & -e_{(1,1,1)} & e_{(1,1,0)}  & -e_{(0,0,1)} & -e_{(0,0,0)} & -e_{(0,1,1)} & e_{(0,1,0)}\\
e_{(1,1,0)} & e_{(1,1,0)} & e_{(1,1,1)} & e_{(1,0,0)} & -e_{(1,0,1)} & -e_{(0,1,0)} & e_{(0,1,1)} & -e_{(0,0,0)} & -e_{(0,0,1)}\\
e_{(1,1,1)} & e_{(1,1,1)} & e_{(1,1,0)} & e_{(1,0,1)} & e_{(1,0,0)} & -e_{(0,1,1)} & -e_{(0,1,0)} & e_{(0,0,1)} & -e_{(0,0,0)}
 \end{array}$}
\end{equation*}
\caption{The octonion multiplication table.}
\label{cayleygravestable}
\end{table}
\end{center}

\subsection{Göpel subsets and maximal isotropic subspaces}

For more details and further perspectives, see \cite{coble}, \cite{dolgachevortland} and \cite{manivel}.

\begin{definition}
 Let $C$ be a plane quartic. A \emph{Göpel subspace} is a maximal isotropic
 subspace of $\Jac{C}[2]$ with respect to the Weil pairing. The set of the $7$ nonzero elements of a Göpel subspace
 is called a \emph{Göpel subset}.
\end{definition}

There are $135$ maximal isotropic subspaces of a symplectic vector space of dimension $6$ over $\FF_2$, see \cite{artin}.
Thus, there are $135$ Göpel subsets.

A Göpel subset is naturally a Fano plane. Therefore, the automorphism group $\mathrm{PGL}(3,\FF_2)$ 
of the Fano plane is naturally a subgroup of the stabilizer of a Göpel subset. To understand the rest of
the stabilizer it is convenient to once again recall the Del Pezzo picture. By taking the double cover of 
$\PP^2$ branched along a plane quartic $C$ we obtain a Del Pezzo surface $S$ of degree $2$. The surface
$S$ can also be obtained from blowing up (another) $\PP^2$ in seven points. An ordering of the seven points 
gives rise to seven ordered exceptional curves $D_1, \ldots, D_7$ in the Picard group $\mathrm{Pic}(S)$. The orthogonal complement
$K_S^{\perp}$ of $K_S$ in $\mathrm{Pic}(S)$ is naturally identified with the root lattice of the root system
$E_7$ and the curves $D_1, \ldots, D_7$ give rise to an ordered basis of $K_S^{\perp}$. Such a basis coming from a blow up is called a \emph{geometric marking}. A geometric marking of $S$ naturally corresponds to a level $2$ structure on $C$ (see, e.g., \cite{bergvallthesis} for details). Under this correspondence, the $63$ nonzero elements of $\Jac{C}[2]$ correspond to
the $63$ positive roots of the root system $E_7$ with respect to the geometric marking. In particular, the seven elements of a Göpel subset give rise to seven positive roots. The remainder of the stabilizer of a Göpel subset comes from the operation of switching such a root to its negative. However, we may only choose the direction of $6$ out of $7$ roots freely - once $6$ directions are chosen there is a unique direction of the final root so that the $7$ roots constitute part of a choice of positive roots for $E_7$. This explains how the stabilizer subgroup
of a Göpel subset is identified with $\mathrm{PGL}(3,\FF_2)\ltimes \FF_2^6$. It is a maximal subgroup of cardinality $10752$ and
index $135$.

\subsection{Syzygetic tetrads and isotropic planes}

\begin{definition}
 Let $\{\theta_1, \theta_2, \theta_3\}$ be a set of three odd theta characteristics on a curve
 $C$. The set is called an \emph{azygetic triad} if
 \begin{equation*}
  \mathrm{Arf}(\theta_1) +  \mathrm{Arf}(\theta_2) + \mathrm{Arf}(\theta_3) + \mathrm{Arf}(\theta_1+\theta_2+\theta_3) = 1,
 \end{equation*}
 otherwise it is called a \emph{syzygetic triad}.
\end{definition}

Note that if $\{\theta_1, \theta_2, \theta_3\}$ is a syzygetic triad of odd theta characteristics, 
then $\theta_{123}=\theta_1+\theta_2+\theta_3$ is an odd theta characteristic. Moreover,
any subset of three elements of $\{\theta_1, \theta_2,\theta_3, \theta_{123}\}$ is a syzygetic triad
such that the sum of the three elements is equal to the fourth.

\begin{definition}
 A \emph{syzygetic tetrad} is a set
 $\{\theta_1, \theta_2, \theta_3, \theta_4\}$ of four odd theta characteristics such that
 any subset of $3$ elements is a syzygetic triad and such that
 the sum of any three elements is equal to the fourth.
\end{definition}

In terms of bitangents, a syzygetic tetrad is a set of four bitangents such that
the intersection points of the bitangents and the quartic lie on a conic, see Figure~\ref{tetrad}.
Given a syzygetic tetrad of odd theta characteristics $\{\theta_1, \theta_2, \theta_3, \theta_4\}$
we may choose one of the four theta characteristics $\theta$ of the tetrad and construct
the plane
\begin{equation*}
 V = \{\theta_1-\theta, \theta_2-\theta, \theta_3-\theta, \theta_4-\theta \} \subset \Jac{C}[2].
\end{equation*}
One may show that $V$ is isotropic and independent of the choice of $\theta$, see Corollary 5.4.5
of \cite{dolgachev}. Furthermore, the isotropic planes of $\Jac{C}[2]$ correspond bijectively to the
syzygetic tetrads. Thus, there are $(2^6-1)\cdot (2^5-2)/|\mathrm{GL}(2,\FF_2)|=315$
syzygetic tetrads on a plane quartic.

 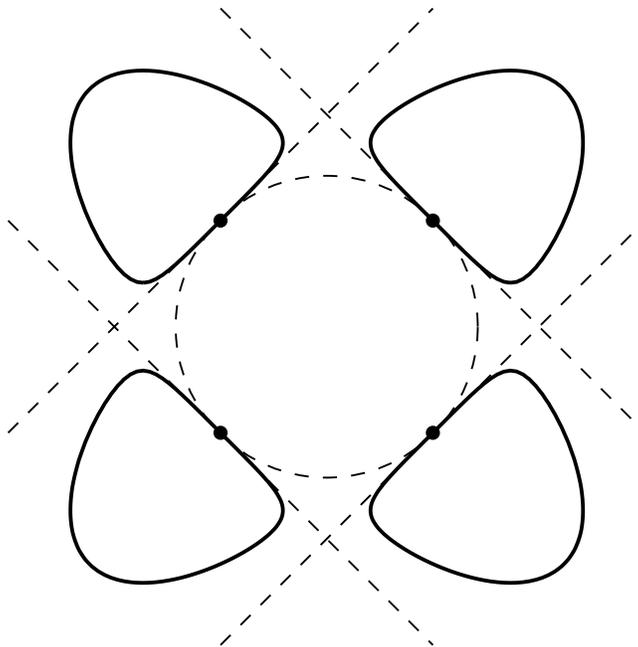
\begin{figure}[ht]
\centering
\resizebox{\linewidth}{!}{
 \begin{tikzpicture}
  \begin{axis}[axis lines=none,axis equal]
    \addplot +[id=edgeplot2,black,no markers, raw gnuplot, thick, empty line = jump 
      ] 
      gnuplot {
      set contour base;
      set cntrparam levels discrete 0.003;
      unset surface;
      set view map;
      set isosamples 500;
      set xrange [-3:3];
      set yrange [-3:3];
      splot x^4+y^4-6*(x^2+y^2)+10;
    };
    \addplot[dashed,black,domain=-3:1]{x+2};
    \addplot[dashed,black,domain=-1:3]{x-2};
    \addplot[dashed,black,domain=-1:3]{-x+2};
    \addplot[dashed,black,domain=-3:1]{-x-2};
    \draw[dashed,black] (300,300) circle [radius=32pt];
    \node [circle, black, fill, minimum size= 3pt, inner sep=0] at (400, 400) {};
    \node [circle, black, fill, minimum size= 3pt, inner sep=0] at (200, 400) {};
    \node [circle, black, fill, minimum size= 3pt, inner sep=0] at (200, 200) {};
    \node [circle, black, fill, minimum size= 3pt, inner sep=0] at (400, 200) {};
  \end{axis}
\end{tikzpicture}
}

\caption{A syzygetic tetrad on a plane quartic (the figure is reproduced from \cite{bergvallthesis}).}
\label{tetrad}
\end{figure}

The determination of the stabilizer of a syzygetic tetrad follows from standard
results around stabilizers of isotropic subspaces in symplectic spaces.
For completeness, we sketch the argument.

An element $\sigma$ of $\symp{6}{\FF_2}$ stabilizing the isotropic plane 
$V$ will necessarily stabilize the orthogonal complement $V^{\perp}$ of
$V$. Since $V$ is isotropic we have $V \subset V^{\perp}$. Thus,
$\sigma$ preserves the flag $0 \subset V \subset V^{\perp} \subset \Jac{C}[2]$.
The Weil pairing induces a symplectic pairing on the quotient $V^{\perp}/V$
so the stabilizer of $V$ contains a copy of $\mathrm{Sp}\left( V^{\perp}/V \right)\cong \symp{2}{\FF_2}$.
The stabilizer also contains a copy of $\mathrm{GL}(2, \FF_2)$ stabilizing $V$.
To identify the rest of the stabilizer we choose a symplectic basis $x_1,x_2,x_3, y_1, y_2, y_3$ such that
$V$ is spanned by $x_1$ and $x_2$ and $V^{\perp}$ is spanned by
$x_1,x_2, x_3$ and $y_3$; this is possible by Witt's lemma.
The elements of $\symp{6}{\FF_2}$ of the form
\begin{equation*}
\begin{array}{llcl}
A_{i,j}:& x_j & \mapsto & x_j + x_i \\
\, & y_i & \mapsto & y_i + y_j \\
B_{i,j}: & y_i & \mapsto & y_i + x_j \\
\, & y_j & \mapsto & y_j + x_i
\end{array}
\end{equation*} 
generate a non-abelian special $2$-group $G$ such that its center $Z(G)$ is an
abelian group isomorphic to $\FF_2^3$ and such that $G/Z(G)$ is isomorphic to $\FF_2^4$.
Thus, $G$ is an extension $G=\FF_2^3.\FF_2^4$.
These parts can be shown to constitute the full stabilizer; more precisely,
they fit together in a semidirect product $(\symp{2}{\FF_2} \times \mathrm{GL}(2,\FF_2)) \ltimes \FF_2^3.\FF_2^4$.

\begin{rem}
 Of course, $\symp{2}{\FF_2} \cong \mathrm{GL}(2,\FF_2) \cong S_3$ so one could in principle say that
 the stabilizer is $(S_3 \times S_3) \ltimes \FF_2^3.\FF_2^4$. This is the approach of
 \cite{conwayetal}. However, we found the above approach to be more transparent.
\end{rem}

\subsection{Azygetic triads of Steiner complexes}

A pair $\{\Sigma(u),\Sigma(v)\}$ of Steiner complexes is called \emph{syzygetic}
if $\bil{u,v}=0$ and it is called \emph{azygetic} if $\bil{u,v}=1$. 
A triple of three mutually syzygetic Steiner complexes is called a \emph{syzygetic triad
of Steiner complexes}. It can be shown that if $\{\Sigma(u),\Sigma(v),\Sigma(w)\}$ is a syzygetic
triad of Steiner complexes, then
\begin{equation*}
 \Sigma(u) \cup \Sigma(v) \cup \Sigma(w) = \Theta^-.
\end{equation*}
However, we are mainly interested in the opposite case since it corresponds
to a maximal subgroup of $\symp{6}{\FF_2}$.

\begin{definition}
 A triple $\{\Sigma(u),\Sigma(v),\Sigma(w)\}$ of Steiner complexes is called \emph{azygetic}
 if the vectors $u$, $v$ and $w$ form the nonzero vectors of a non-isotropic plane in $\Jac{C}[2]$.
\end{definition}

There are $336$ azygetic triads on a plane quartic. As mentioned above, there is a symmetric group
$S_6$ permuting pairs of elements of each Steiner complex (recall that a Steiner complex
naturally consists of $6$ pairs of odd theta characteristics) and there is a symmetric
group $S_3$ permuting the three Steiner complexes. This explains why the stabilizer subgroup
in $\symp{6}{\FF_2}$ of an azygetic tetrad of Steiner complexes can be identified with
the product $S_3 \times S_6$. It is a maximal subgroup of $\symp{6}{\FF_2}$ of cardinality $4320$ and
index $336$.

\subsection{Enneads and the Study quadric}

We have now covered structures preserved by all maximal subgroups of
$\symp{6}{\FF_2}$ except one - a subgroup of cardinality $1512$ and
index $960$. 
This subgroup was somewhat mysterious for some time but has now been studied
extensively by
Dye \cite{dye70}, Edge \cite{edge63} \cite{edge73}, Frame \cite{frame} and 
Study \cite{study} to mention a few. The material presented in this section is mainly due to them.

Cayley and Hesse denoted the $28$ bitangents by indexing them with pairs
of objects from a set of $8$ objects (Section~\ref{cayleyoctadssec} expands on this perspective).
Study observed that one may take the $8$ elements to be eight variables $x_1, \ldots, x_8$ and
the $28$ pairs to be the $28$ monomials $x_ix_j$, $1 \leq i <j \leq 8$. Then,
much of the geometry of the $28$ bitangents can be explored via the
\emph{Study quadric}
\begin{equation}
\label{studyeq}
 S= \, \sum_{1 \leq i <j \leq 8} x_ix_j, \quad x_i \in \FF_2. 
\end{equation}
For instance, $S$ defines a variety $V(S)$ in $\PP^7(\FF_2)$ with $135$ points
corresponding to the $135$ Göpel subsets and the $120$ points not on $V(S)$ correspond
to the $120$ systems of Riemann-Dickson coordinates. The lines in $\PP^7(\FF_2)$ fall
into different classes depending on the number of points they have in common with $V(S)$;
given a point $P$ outside $V(S)$ there are exactly $28$ lines through $P$ which do not
meet $V(S)$, $63$ lines through $P$ which meet $V(S)$ once and $36$ lines through $P$
which intersect $V(S)$ in two $\FF_2$-points. 

The form of $S$ given in Equation~\ref{studyeq} depends on the chosen coordinates for
$\PP^7(\FF_2)$ but there are many choices of coordinates for $\PP^7(\FF_2)$ which
preserve the form of $S$. To investigate the matter further, let $P_i$ denote
the point whose $i$th coordinate is $1$ and whose other coordinates are $0$ and
let $P_9$ denote the point whose coordinates are all $1$. 
The points $P_1, \ldots, P_9$ then all lie on $V(S)$
and the points $P_1, \ldots, P_8$
naturally correspond to the above choice of coordinates. Furthermore, any choice of $8$ points
among $P_1, \ldots, P_9$ corresponds to another choice of coordinates which leaves $S$ invariant.

Recall that the function $\Phi: \PP^7(\FF_2) \times \PP^7(\FF_2) \to \FF_2$
given by
\begin{equation*}
 \Phi(x,y) = \sum_{1 \leq i < j \leq 8} x_iy_j 
\end{equation*}
is called the \emph{polar form} with respect to $S$ and that two points $P$ and $Q$ in $\PP^7(\FF_2)$ such that
$\Phi(P,Q)=0$ are called \emph{conjugate} with respect to $S$.
We see that no two of the points $P_1, \ldots, P_9$ are conjugate with respect to $S$. 
Moreover, the chord joining any pair of the points $P_1, \ldots, P_9$ intersects
$V(S)$ precisely in those two points; the chord $L$ joining $P_i$ and $P_j$ contains $3$ points
and the remaining point has precisely $2$ nonzero coordinates - $V(S)$ does not
contain any points with precisely $2$ nonzero coordinates. It turns out that these two properties
characterize 9-tuples coming from choices of coordinates which gives $S$ the form of
Equation~\ref{studyeq}.

\begin{definition}
 A set of nine points on $V(S)$ such that
 \begin{itemize}
  \item no two points are conjugate with respect to $S$, and
  \item no chord between two points is contained in $V(S)$
 \end{itemize}
 is called an \emph{ennead}. 
\end{definition}

There are precisely $960$ enneads (and, thus, $960 \cdot 9!$ different choices of
coordinates preserving Equation~\ref{studyeq}).

Dye \cite{dye70} uses character theory to identify the stabilizer $G$ of an ennead
as a finite group of order $1512$ containing the group $\mathrm{PSL}(2,\FF_8)$ as
a maximal simple subgroup of index $3$. Up to isomorphism, there are exactly two
such groups. The character table of $G$ is given by Littlewood \cite{littlewood} p. 279.
The character table is not that of $\mathrm{PSL}(2,\FF_8) \times \FF_3$ so $G$ must be
the other possibility, namely the projective semilinear group $\mathrm{P}\Gamma\mathrm{L}(2,\FF_8)$ -
this group is also known as the Ree group $\mathrm{Ree}(3)$. The paper \cite{edge73} of Edge
is devoted to describing the action explicitly, we refer to his paper for details.
Dye has investigated $G$ in several works  after \cite{dye70}, see for instance
\cite{dye77, dye83b, dye83}. 

\subsection{Summary}

We summarize the results in Table~\ref{structuretable}.

\begin{center}
\begin{table}[h!]
\resizebox{\textwidth}{!}{
\begin{tabular}{l|c|r|r|l}
 Structure & Stabilizer & Size & \# & Max.?  \\
 \hline
 Bitangent (odd theta) & $W(E_6)$ & $51840$ & $28$ & Yes \\
 Cayley octad (even theta) & $S_8$ & $40320$ & $36$ & Yes \\
 Steiner complex & $\FF_2^5 \rtimes S_6$ (alt. $W(D_6)$) & $23040$ & $63$ & Yes \\
 Riemann-Dickson coordinates & $G_2(2)$ & $12096$ & $120$ & Yes \\
 Göpel subset (max. isotropic subspace) & $\mathrm{PGL}(3,\FF_2)\ltimes \FF_2^6$ & $10752$ & $135$ & Yes \\
 Aronhold heptad & $S_7$ & $5040$ & $288$ & No \\
 Syzygetic tetrad (isotropic plane) & $(\symp{2}{\FF_2} \times \mathrm{GL}(2,\FF_2) )\ltimes \FF_2^3.\FF_2^4$ & $4608$ & $315$ & Yes \\
 Azygetic triad & $S_3 \times S_6$ & $4320$ & $336$ & Yes \\
 Ennead & $\mathrm{P}\Gamma\mathrm{L}(2,\FF_8)$ (alt. $\mathrm{Ree}(3)$) & $1512$ & $960$ & Yes
\end{tabular}}
\caption{Various structures associated to a plane quartic curve, their stabilizers, the size of the stabilizer, the number of inequivalent structures and whether or not the stabilizer is a maximal subgroup of $\symp{6}{\FF_2}$.}
\label{structuretable}
\end{table}
\end{center}

\section{Cohomological computations}
\label{cohsec}

In this section we compute the cohomology of moduli spaces of plane quartic curves
marked with the various structures from Section~\ref{classsec}.

\subsection{Moduli of plane quartics with a marked bitangent (odd theta characteristic)}
The cohomology of the moduli space $Q_{\mathrm{btg}}$ of plane quartics with a marked bitangent
was first computed by Tommasi \cite{tommasi}.
Her computation used a Vassiliev type method.
Here we explain how to read off the result from our results in \cite{bergvall_gd} and
\cite{bergvall_pts} in two different ways.

\begin{thm}[Tommasi \cite{tommasi}]
\label{bitangentthm}
The dimensions of the rational de Rham cohomology groups of the moduli space of plane quartic curves with a marked bitangent line (i.e. an odd theta characteristic) are
\begin{equation*}
\begin{array}{lcl}
 \dim (\Hi{0}(Q_{\mathrm{btg}})) & = & 1 \\
 \dim (\Hi{1}(Q_{\mathrm{btg}})) & = & 0 \\
 \dim (\Hi{2}(Q_{\mathrm{btg}})) & = & 0 \\
 \dim (\Hi{3}(Q_{\mathrm{btg}})) & = & 0 \\
 \dim (\Hi{4}(Q_{\mathrm{btg}})) & = & 0 \\
 \dim (\Hi{5}(Q_{\mathrm{btg}})) & = & 1 \\
 \dim (\Hi{6}(Q_{\mathrm{btg}})) & = & 2
\end{array}
\end{equation*}
and $\dim(\Hi{i})=0$ for $i \neq 0, 5, 6$. The cohomology group $\Hi{i}$ is a pure Hodge structure
of type $(i,i)$. 
\end{thm}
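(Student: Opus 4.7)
The plan is to exploit the description of $Q_{\mathrm{btg}}$ as a quotient of the level two moduli space by the stabilizer of a bitangent. Since a marked bitangent is the same as a marked odd theta characteristic, whose stabilizer in $\symp{6}{\FF_2}$ is the Weyl group $W(E_6)$ of index $28$, one has
\begin{equation*}
Q_{\mathrm{btg}} \cong Q[2]/W(E_6),
\end{equation*}
and hence, rationally,
\begin{equation*}
\Hi{i}(Q_{\mathrm{btg}}, \mathbb{Q}) \cong \Hi{i}(Q[2], \mathbb{Q})^{W(E_6)}.
\end{equation*}
The problem thus reduces to identifying the trivial $W(E_6)$-isotypic component in each degree.

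The first route uses \cite{bergvall_pts}. Via the isomorphism $Q[2] \cong \mathcal{P}^2_{7,\mathrm{ord}}$ recalled in Section~\ref{aronholdsec}, the rational cohomology of $Q[2]$ is known in every degree as an explicit $\symp{6}{\FF_2}$-character, and each group is pure of Hodge type $(i,i)$. The remaining step is purely representation-theoretic: restrict each character to $W(E_6)$ and count trivial summands, most efficiently by Frobenius reciprocity, pairing the $\symp{6}{\FF_2}$-character in question with the permutation character on the $28$ bitangents. Purity of each $\Hi{i}(Q_{\mathrm{btg}})$ is inherited from purity of $\Hi{i}(Q[2])$, since rational cohomology commutes with taking invariants under a finite group.

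The second route uses \cite{bergvall_gd} together with the Del Pezzo viewpoint from Section~\ref{aronholdsec}. The double cover of $\mathbb{P}^2$ branched along a plane quartic is a degree $2$ Del Pezzo surface $S$, and bitangents correspond to $(-1)$-curves on $S$. Contracting a chosen $(-1)$-curve produces a cubic surface $S'$ together with the marked point $p \in S'$ (lying off the $27$ lines) from which it was blown up. This realizes $Q_{\mathrm{btg}}$ as an open subscheme of the universal smooth cubic surface over its moduli, and the Poincaré polynomial is then obtained from the cohomology of the base together with the cohomology of the fiber (a cubic surface with its $27$ lines removed), combined via a Leray spectral sequence that degenerates by purity.

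The main obstacle in either approach is bookkeeping rather than any conceptual difficulty: in the first approach one must identify conjugacy classes of $W(E_6)$ inside $\symp{6}{\FF_2}$ precisely enough to evaluate the restricted characters; in the second approach one must track how the $W(E_6)$-action on the $27$ lines of a cubic surface interacts with the excision exact sequence for the line arrangement inside the universal family. Both are routine but careful computations, and the cleanest sanity check is that the two independent arguments yield the same Poincaré polynomial, as predicted by the theorem.
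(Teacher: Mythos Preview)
Your first route is exactly the paper's ``Version 2'': identify $Q_{\mathrm{btg}}\cong Q[2]/W(E_6)$, then compute $\dim\Hi{i}(Q[2])^{W(E_6)}$ via Frobenius reciprocity by pairing the known $\symp{6}{\FF_2}$-character of $\Hi{i}(Q[2])$ against the permutation character $\phi_{1a}+\phi_{27a}$ on the $28$ bitangents, with purity inherited from $Q[2]$. That part is correct and matches the paper.

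Your second route, however, is not the paper's ``Version 1''. The paper's alternative is again purely tabular: it starts from the precomputed $\symp{6}{\FF_2}$-equivariant cohomology of the space $Q_{\mathrm{btg}}[2]$ (plane quartics with both a marked bitangent \emph{and} a level~$2$ structure), and simply reads off the multiplicity of the trivial representation in each degree, since $Q_{\mathrm{btg}}=Q_{\mathrm{btg}}[2]/\symp{6}{\FF_2}$. Your geometric alternative via the universal cubic surface is different in spirit: it trades representation-theoretic bookkeeping for a fibration argument. Two cautions if you pursue it. First, bitangents correspond to Geiser pairs of $(-1)$-curves on the degree~$2$ Del Pezzo, not to single $(-1)$-curves; the identification with (cubic surface, point off the $27$ lines) survives because the Geiser involution makes the two blow-downs isomorphic, but this should be said. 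Second, ``Leray degenerates by purity'' is believable here but not automatic: you need the base (moduli of cubic surfaces) and the fibre (cubic surface minus its $27$ lines) to carry compatible pure weights, and you should say where that input comes from. The payoff of your route is a more geometric explanation of each Betti number; the paper's route is shorter because the hard work is already encoded in the tables.
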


\begin{proof}
 Version 1: The cohomology of the moduli space $Q_{\mathrm{btg}}[2]$ of plane quartics with a marked bitangent line and level two structure is given in Table~\ref{bitangenttable} as a representation of $\mathrm{Sp}(6,\FF_2)$.
 The cohomology of the quotient by $\mathrm{Sp}(6,\FF_2)$, i.e. the cohomology of $Q[2]$, can be read off
 as the invariant part, i.e. the part given by the trivial representation. This can be read off in column 1 of Table~\ref{bitangenttable}. The cohomology group $\Hi{i}(Q_{\mathrm{btg}}[2])$ is a pure Hodge structure of type $(i,i)$ by Lemma~2 and Equation~(1) of Section 6 and in \cite{bergvall_gd} so the same is true for the $\mathrm{Sp}(6,\FF_2)$-invariant part.
 
 Version 2: The cohomology of the moduli space $Q[2]$ of plane quartics with level two structure is given 
 in Table~\ref{Qtable} as a representation of $\mathrm{Sp}(6,\FF_2)$. 
 The stabilizer subgroup $G$ of a bitangent line  has character $\phi_{1a} + \phi_{27a}$ 
 (i.e. the representation $\mathrm{Ind}_{G}^{\mathrm{Sp}(6,\FF_2)} \mathrm{Triv}$ has character 
 $\phi_{1a} + \phi_{27a}$), see \cite{conwayetal}, p. 46. By Frobenius repricprocity, we obtain the dimension of $\Hi{i}(Q_{\mathrm{btg}})$
 by taking the inner product (in the sense of character theory) of $\phi_{1a} + \phi_{27a}$ and
 $\Hi{i}(Q_{\mathrm{btg}})$. Thus, we obtain $\mathrm{dim}(\Hi{i}(Q_{\mathrm{btg}}))$ by adding the multiplicities
 in the columns corresponding to $\phi_{1a}$ and $\phi_{27a}$ on the row corresponding to $\Hi{i}(Q[2])$. By Lemma 2, Section 6 och \cite{bergvall_gd} $\Hi{i}(Q[2])$ is a pure Hodge structure of
 type $(i,i)$ so the same is true for $\Hi{i}(Q_{\mathrm{btg}})=\Hi{i}(Q[2]/G)$. 
\end{proof}
 
\begin{cor}
\label{ptcountcor}
Let $q$ be a power of an odd prime number and let $\FF_q$ be a finite field with $q$ elements.
The number of pairs $(C,B)$ of a plane quartic curve $C$ and a bitangent line $B$ to $C$,
both defined over $\FF_q$, is 
$$q^6-q+2.$$ 
\end{cor}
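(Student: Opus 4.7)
The plan is to derive the point count from Theorem~\ref{bitangentthm} by the Grothendieck-Lefschetz trace formula. The inputs are the Betti numbers $\dim H^i(Q_{\mathrm{btg}}) = (1,0,0,0,0,1,2)$ together with the statement that each $H^i$ is pure of Hodge-Tate type $(i,i)$, both supplied by the theorem.

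First, I would note that $Q_{\mathrm{btg}}$ is smooth of dimension $6$ and spreads to a smooth scheme over $\mathrm{Spec}\,\ZZ[1/2]$. Combining the purity statement with Poincaré duality
\begin{equation*}
H^i_c\bigl((Q_{\mathrm{btg}})_{\FFbar_q},\QQ_\ell\bigr) \cong H^{12-i}\bigl((Q_{\mathrm{btg}})_{\FFbar_q},\QQ_\ell\bigr)^{\vee}(-6)
\end{equation*}
pins down the Frobenius eigenvalues on each compactly supported cohomology group: $H^{12}_c$ is one-dimensional with Frobenius acting by $q^6$, $H^7_c$ is one-dimensional with Frobenius acting by $q$, $H^6_c$ is two-dimensional with Frobenius acting by $1$, and all other $H^i_c$ vanish. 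The Grothendieck-Lefschetz trace formula
\begin{equation*}
\bigl| Q_{\mathrm{btg}}(\FF_q) \bigr| = \sum_i (-1)^i \Tr\bigl(F \mid H^i_c\bigr)
\end{equation*}
then yields $q^6 - q + 2$.

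The main obstacle is the passage from the Hodge-theoretic statement over $\C$ to the required purity of $\ell$-adic Frobenius eigenvalues over $\FF_q$. This is a standard spreading-out argument (essentially identical to those employed in \cite{bergvall_gd, bergvall_pts}): one realises $Q_{\mathrm{btg}}$ as a smooth scheme over a localisation of $\ZZ$ whose generic fibre recovers the complex moduli space, invokes smooth base change to transport cohomological information to characteristic $p$, and uses the compatibility between the Hodge filtration and Deligne's weight filtration to conclude that Frobenius acts by $q^i$ on $H^i$ for every $q$ coprime to the inverted primes.
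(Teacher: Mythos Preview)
Your argument is correct and follows essentially the same route as the paper's proof: pass from the de Rham purity statement to minimal purity of the $\ell$-adic compactly supported cohomology via comparison, base change and Poincar\'e duality, then apply the Lefschetz trace formula. Your version is slightly more explicit about the Poincar\'e duality computation of the Frobenius eigenvalues on each $H^i_c$, but the content is the same.
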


\begin{proof}
 A standard argument using Artin's comparison theorem, constructibility, base
 change and Poincaré duality shows that Theorem~\ref{bitangentthm} can be interpreted in terms
 of compactly supported ètale cohomology over a field of odd characteristic.
 In the étale setting, the purity statement says that $Q_{\mathrm{btg}}$ is minimally pure
 in the sense of Dimca and Lehrer \cite{dimcalehrer}, i.e. that the Frobenius endomorphism
 acts as $q^{k-\mathrm{dim}(Q_{\mathrm{btg}})}$ on $\mathrm{H}^k_{\text{\'et},c}(Q_{\mathrm{btg}},\QQ_{\ell})$.
 By the Lefschetz trace formula we have
 \begin{equation*}
 \begin{array}{lcl}
   |Q_{\mathrm{btg}}(\FF_q)| & = & 
  \sum_{k \geq 0} (-1)^k\Tr \left(F , \mathrm{H}^k_{\text{\'et},c}(Q_{\mathrm{btg}},\QQ_{\ell}) \right) = \\
  & = & \sum_{k \geq 0} (-1)^k \mathrm{\dim} \left( \mathrm{H}^k_{\text{\'et},c}(Q_{\mathrm{btg}},\QQ_{\ell}) \right)q^{k- \mathrm{dim}\left(\mathrm{H}^k_{\text{\'et},c}(Q_{\mathrm{btg}},\QQ_{\ell})\right)} = \\
  & = & q^6-q+2.
  \end{array}
 \end{equation*}
\end{proof}

\subsection{Moduli of plane quartics with a marked Cayley octad (even theta characteristic)}

We denote the moduli space of plane quartics
with a marked Cayley octad by $Q_{\mathrm{CO}}$.

\begin{thm}
\label{cayleythm}
The dimensions of the rational de Rham cohomology groups of the moduli space of plane quartic curves with a marked Cayley octad (i.e. an even theta characteristic) are
\begin{equation*}
\begin{array}{lcl}
 \dim (\Hi{0}(Q_{\mathrm{CO}})) & = & 1 \\
 \dim (\Hi{1}(Q_{\mathrm{CO}})) & = & 1 \\
 \dim (\Hi{2}(Q_{\mathrm{CO}})) & = & 0 \\
 \dim (\Hi{3}(Q_{\mathrm{CO}})) & = & 0 \\
 \dim (\Hi{4}(Q_{\mathrm{CO}})) & = & 0 \\
 \dim (\Hi{5}(Q_{\mathrm{CO}})) & = & 1 \\
 \dim (\Hi{6}(Q_{\mathrm{CO}})) & = & 4
\end{array}
\end{equation*}
The cohomology group $\Hi{i}$ is a pure Hodge structure of type $(i,i)$. 
\end{thm}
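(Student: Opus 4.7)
The plan is to mimic the two-pronged strategy used for Theorem~\ref{bitangentthm}, since the moduli space $Q_{\mathrm{CO}}$ is the quotient of $Q[2]$ by the stabilizer $G\cong S_8$ of a Cayley octad, a maximal subgroup of $\mathrm{Sp}(6,\FF_2)$ of index $36$ (by Section~\ref{cayleyoctadssec}). Hence either (a) if the table for the cohomology of $Q_{\mathrm{CO}}[2]$ as an $\mathrm{Sp}(6,\FF_2)$-representation is available in \cite{bergvall_gd, bergvall_pts}, I would just read off the $\mathrm{Sp}(6,\FF_2)$-invariants column, exactly as in Version~1 of the previous proof; or (b) I would use Frobenius reciprocity on $Q[2]$ together with the character of the induced permutation representation $\mathrm{Ind}_{S_8}^{\mathrm{Sp}(6,\FF_2)}\mathrm{Triv}$.

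For the Frobenius reciprocity route I would first look up (or directly compute) the decomposition of $\mathrm{Ind}_{S_8}^{\mathrm{Sp}(6,\FF_2)}\mathrm{Triv}$ into irreducible characters of $\mathrm{Sp}(6,\FF_2)$. This character has degree $36$ and sits in the ATLAS entry for $S_6(2)$ \cite{conwayetal} as the permutation character on the $36$ cosets of $S_8$; the first irreducible constituent is $\phi_{1a}$ and the sum of the degrees of the constituents must equal $36$. Having this decomposition in hand, Frobenius reciprocity gives
\begin{equation*}
\dim \Hi{i}(Q_{\mathrm{CO}})
= \bigl\langle \mathrm{Ind}_{S_8}^{\mathrm{Sp}(6,\FF_2)}\mathrm{Triv},\, \Hi{i}(Q[2]) \bigr\rangle
= \sum_{\phi} \bigl[\Hi{i}(Q[2]):\phi\bigr],
\end{equation*}
where $\phi$ runs over the irreducible constituents of the permutation character. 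So I would simply add the multiplicities in the relevant columns of Table~\ref{Qtable} along the row for $\Hi{i}(Q[2])$ and check that the totals match the claimed values $1,1,0,0,0,1,4$.

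For the Hodge-theoretic statement I would invoke exactly the input used in Theorem~\ref{bitangentthm}: by Lemma~2 and Equation~(1) of Section~6 of \cite{bergvall_gd}, each $\Hi{i}(Q[2])$ is a pure Hodge structure of type $(i,i)$. Since $Q_{\mathrm{CO}} = Q[2]/G$ and taking $G$-invariants preserves pure Hodge structures, $\Hi{i}(Q_{\mathrm{CO}})$ is pure of type $(i,i)$ as well.

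The main obstacle is the character-theoretic step: I need an explicit and correct decomposition of $\mathrm{Ind}_{S_8}^{\mathrm{Sp}(6,\FF_2)}\mathrm{Triv}$, since an error in which irreducibles of $\mathrm{Sp}(6,\FF_2)$ appear (and with what multiplicities) would feed directly into wrong Betti numbers. Once that character is pinned down (either from the ATLAS or via a direct double-coset count using conjugacy class data of $\mathrm{Sp}(6,\FF_2)$), the rest of the argument is bookkeeping against the already-computed cohomology of $Q[2]$, and purity is automatic.
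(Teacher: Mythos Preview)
Your approach is exactly that of the paper: route (b) via Frobenius reciprocity against Table~\ref{Qtable}, together with the purity of $\Hi{i}(Q[2])$ from \cite{bergvall_gd}, is precisely how the proof proceeds. The only missing datum you flag is supplied by the ATLAS \cite[p.~46]{conwayetal}: the permutation character on $\mathrm{Sp}(6,\FF_2)/S_8$ decomposes as $\phi_{1a}+\phi_{35b}$, and summing those two columns of Table~\ref{Qtable} rowwise gives $1,1,0,0,0,1,4$.
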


\begin{proof}
The cohomology of the moduli space $Q[2]$ of plane quartics with level two structure is given 
 in Table~\ref{Qtable} as a representation of $\mathrm{Sp}(6,\FF_2)$. 
 The stabilizer subgroup $G$ of a Cayley octad  has character $\phi_{1a} + \phi_{35b}$ 
 (i.e. the representation $\mathrm{Ind}_{G}^{\mathrm{Sp}(6,\FF_2)} \mathrm{Triv}$ has character 
 $\phi_{1a} + \phi_{35b}$), see \cite{conwayetal}, p. 46. By Frobenius repricprocity, we obtain the dimension of $\Hi{i}(Q_{\mathrm{btg}})$
 by taking the inner product (in the sense of character theory) of $\phi_{1a} + \phi_{35b}$ and
 $\Hi{i}(Q_{\mathrm{btg}})$. Thus, we obtain $\mathrm{dim}(\Hi{i}(Q_{\mathrm{CO}}))$ by adding the multiplicities
 in the columns corresponding to $\phi_{1a}$ and $\phi_{35b}$ on the row corresponding to $\Hi{i}(Q[2])$. By Lemma 2, Section 6 och \cite{bergvall_gd} $\Hi{i}(Q[2])$ is a pure Hodge structure of
 type $(i,i)$ so the same is true for $\Hi{i}(Q_{\mathrm{btg}})=\Hi{i}(Q[2]/G)$. 
\end{proof}

A computation analogous to the proof of Corollary~\ref{ptcountcor} gives the following.

\begin{cor}
Let $q$ be a power of an odd prime number and let $\FF_q$ be a finite field with $q$ elements.
The number of pairs $(C,O)$ of a plane quartic curve $C$ and a Cayley octad $O$ of $C$,
both defined over $\FF_q$, is 
$$q^6-q^5-q+4.$$ 
\end{cor}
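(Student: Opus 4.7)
The plan is to mimic the proof of Corollary~\ref{ptcountcor} verbatim, with the dimensions supplied by Theorem~\ref{cayleythm} in place of those from Theorem~\ref{bitangentthm}. First I would reinterpret the rational de Rham cohomology of $Q_{\mathrm{CO}}$ in terms of compactly supported $\ell$-adic étale cohomology over $\FF_q$, using Artin's comparison theorem, constructibility, smooth base change, and Poincaré duality exactly as alluded to in the proof of Corollary~\ref{ptcountcor}. The restriction to odd characteristic is needed so that the relevant moduli problem is well-defined over $\spec{\ZZ[1/2]}$.

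Next I would promote the Hodge-theoretic statement of Theorem~\ref{cayleythm} (that each $\Hi{i}(Q_{\mathrm{CO}})$ is pure of type $(i,i)$) to minimal purity in the sense of Dimca--Lehrer. Since $Q_{\mathrm{CO}}$ is smooth of dimension $6$, Poincaré duality then forces Frobenius to act on $\Hetc{k}{Q_{\mathrm{CO}}}$ as multiplication by the scalar $q^{k-6}$, and gives the dimensions $\dim \Hetc{k}{Q_{\mathrm{CO}}} = \dim \Hi{12-k}(Q_{\mathrm{CO}})$.

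Finally I would plug these data into the Grothendieck--Lefschetz trace formula. The only nonvanishing compactly supported groups sit in degrees $k=12,11,7,6$, dual to $\Hi{0},\Hi{1},\Hi{5},\Hi{6}$, with dimensions $1,1,1,4$ respectively. A one-line computation then gives
\begin{equation*}
 |Q_{\mathrm{CO}}(\FF_q)| = q^6 - q^5 - q + 4,
\end{equation*}
as claimed.

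There is no real obstacle: the argument is mechanical once Theorem~\ref{cayleythm} is in hand and the formal package from the proof of Corollary~\ref{ptcountcor} is borrowed. If anything deserves a second look, it is simply the check that the same base-change and constructibility inputs apply uniformly across all the quotient moduli spaces of $Q[2]$ considered in this section, so that the same template can be reused for the remaining corollaries below.
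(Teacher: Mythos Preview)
Your proposal is correct and follows exactly the approach indicated in the paper: the paper simply states that a computation analogous to the proof of Corollary~\ref{ptcountcor} gives the result, and you have spelled out precisely that analogous computation using the Betti numbers and purity statement of Theorem~\ref{cayleythm}. The degrees, dimensions, and signs you list are all correct and yield $q^6-q^5-q+4$.
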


See also \cite{elsenhansjahnel} for some related computations and constructions.

\subsection{Moduli of plane quartics with a marked Aronhold heptad}

We denote the moduli space of plane quartics
with a marked Aronhold heptad by $Q_{\mathrm{AH}}$.

\begin{thm}
The dimensions of the rational de Rham cohomology groups of the moduli space of plane quartic curves with a marked Aronhold heptad are
\begin{equation*}
\begin{array}{lcl}
 \dim (\Hi{0}(Q_{\mathrm{AH}})) & = & 1 \\
 \dim (\Hi{1}(Q_{\mathrm{AH}})) & = & 1 \\
 \dim (\Hi{2}(Q_{\mathrm{AH}})) & = & 0 \\
 \dim (\Hi{3}(Q_{\mathrm{AH}})) & = & 1 \\
 \dim (\Hi{4}(Q_{\mathrm{AH}})) & = & 4 \\
 \dim (\Hi{5}(Q_{\mathrm{AH}})) & = & 6 \\
 \dim (\Hi{6}(Q_{\mathrm{AH}})) & = & 6
\end{array}
\end{equation*}
The cohomology group $\Hi{i}$ is a pure Hodge structure of type $(i,i)$. 
\end{thm}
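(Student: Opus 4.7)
The plan is to follow the Frobenius reciprocity template already used in the proofs of Theorems \ref{bitangentthm} and \ref{cayleythm}: combine the known decomposition of $\Hi{i}(Q[2])$ as an $\symp{6}{\FF_2}$-representation (Table~\ref{Qtable}) with the character of the permutation representation of $\symp{6}{\FF_2}$ on the set of Aronhold heptads, then read off dimensions and invoke the purity statement from Lemma~2, Section~6 of \cite{bergvall_gd}.

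The new ingredient compared to the previous two theorems is that the stabilizer $G = S_7$ of an Aronhold heptad is \emph{not} maximal in $\symp{6}{\FF_2}$; it sits inside the $S_8$ stabilizing the associated Cayley octad with index $8$. First I would exploit induction in stages:
\begin{equation*}
\mathrm{Ind}_{S_7}^{\symp{6}{\FF_2}} \mathrm{Triv} \;=\; \mathrm{Ind}_{S_8}^{\symp{6}{\FF_2}} \bigl( \mathrm{Ind}_{S_7}^{S_8} \mathrm{Triv} \bigr) \;=\; \mathrm{Ind}_{S_8}^{\symp{6}{\FF_2}} \bigl( \mathrm{Triv}_{S_8} \oplus V_{S_8} \bigr),
\end{equation*}
where $V_{S_8}$ is the standard $7$-dimensional $S_8$-representation. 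The first summand contributes $\phi_{1a}+\phi_{35b}$ by the Cayley octad calculation, so the remaining task is to decompose $\mathrm{Ind}_{S_8}^{\symp{6}{\FF_2}} V_{S_8}$, a representation of degree $7 \cdot 36 = 252$, into irreducible $\symp{6}{\FF_2}$-characters. This can be done by Frobenius reciprocity against each irreducible $\chi$ of $\symp{6}{\FF_2}$: one restricts $\chi$ to $S_8$ and pairs with $V_{S_8}$. The restriction data for characters of $\symp{6}{\FF_2}$ to its maximal $S_8$ is standard (see \cite{conwayetal}, p.~46).

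Once the character $\chi_{\mathrm{AH}}$ of $\mathrm{Ind}_{S_7}^{\symp{6}{\FF_2}} \mathrm{Triv}$ is in hand, I would apply Frobenius reciprocity to obtain
\begin{equation*}
\dim \Hi{i}(Q_{\mathrm{AH}}) \;=\; \bigl\langle \chi_{\mathrm{AH}}, \, \Hi{i}(Q[2]) \bigr\rangle_{\symp{6}{\FF_2}},
\end{equation*}
which reduces to a finite sum of multiplicities taken from Table~\ref{Qtable}. The purity statement then transfers from $Q[2]$ to $Q_{\mathrm{AH}} = Q[2]/S_7$ exactly as in the proofs of Theorems \ref{bitangentthm} and \ref{cayleythm}, since passing to a finite quotient preserves the Hodge type of each cohomology group.

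The main obstacle is the bookkeeping step of identifying the irreducible constituents of $\mathrm{Ind}_{S_8}^{\symp{6}{\FF_2}} V_{S_8}$; the total dimension ($288$) and the richer cohomology in low degrees (note the nonzero $\Hi{3}$ and $\Hi{4}$, unlike in the maximal-subgroup cases) signal that several non-trivial irreducibles of $\symp{6}{\FF_2}$ must occur, so one must be careful to extract the correct multiplicities. Once this decomposition is verified against the sanity checks $\chi_{\mathrm{AH}}(1)=288$ and $\sum (-1)^i \dim \Hi{i}(Q_{\mathrm{AH}}) = $ the Euler characteristic predicted by the point count $|Q_{\mathrm{AH}}(\FF_q)|=q^6-q^5-q^3+4q^2-6q+6$, the computation is complete.
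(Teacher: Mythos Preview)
Your plan is correct and coincides with Version~2 of the paper's proof: compute $\chi_{\mathrm{AH}}=\mathrm{Ind}_{S_7}^{\symp{6}{\FF_2}}\mathrm{Triv}$, pair it against Table~\ref{Qtable} via Frobenius reciprocity, and inherit purity from $Q[2]$. The paper obtains the induced character from an explicit matrix embedding of $S_7$ rather than by induction in stages through $S_8$, but either route yields $\chi_{\mathrm{AH}}=\phi_{1a}+\phi_{27a}+\phi_{35b}+\phi_{105b}+\phi_{120a}$ (so your $252$-dimensional remainder is $\phi_{27a}+\phi_{105b}+\phi_{120a}$), after which the table lookup is immediate. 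The paper also records a shorter Version~1 that bypasses this step entirely: the cohomology of $Q[2]$ as an $S_7$-representation was already tabulated in \cite{bergvalllic,bergvallthesis,bergvall_pts}, and one simply reads off the multiplicity of the trivial representation $s_7$ in each degree.
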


\begin{proof}
 Version 1: The stabilizer $G$ of an Aronhold heptad is isomorphic to the symmetric group $S_7$. 
 The cohomology of $Q[2]$ was computed as a representation of $G$ in $\symp{6}{\FF_2}$
 in \cite{bergvalllic} (see also \cite{bergvallthesis} and \cite{bergvall_pts}). Below, we reproduce the
 table for convenience. We use the notation $s_{\lambda}$ for the irreducible representation
 of $S_7$ corresponding to the partition $\lambda$ of $7$. In particular, $s_7$ denotes the trivial
 representation of $S_7$. We obtain the result by reading off the invariant part, i.e. the column corresponding
 to $s_7$. 
\begin{table}[htbp]
\begin{equation*}
\begin{array}{r|rrrrrrrrrr} 
\, & s_{7} & s_{6,1} & s_{5,2} & s_{5,1^2} & s_{4,3} & s_{4,2,1} & s_{4,1^3} & s_{3^2,1} & s_{3,2^2} & s_{3,2,1^2} \\
\hline
H^0 & 1 & 0 & 0 & 0 & 0 & 0 & 0 & 0 & 0 & 0 \\ 
H^1 & 1 & 1 & 1 & 0 & 1 & 0 & 0 & 0 & 0 & 0 \\ 
H^2 & 0 & 3 & 4 & 4 & 3 & 5 & 1 & 3 & 1 & 1 \\
H^3 & 1 & 8 & 14 & 18 & 14 & 30 & 16 & 16 & 12 & 18 \\ 
H^4 & 4 & 20 & 44 & 47 & 44 & 99 & 56 & 56 & 54 & 83 \\ 
H^5 & 6 & 33 & 76 & 76 & 72 & 178 & 97 & 104 & 105 & 169 \\ 
H^6 & 6 & 23 & 51 & 54 & 54 & 127 & 74 & 76 & 77 & 126 \\
\hline
\, & s_{3,1^4} & s_{2^3,1} & s_{2^2,1^3} & s_{2,1^5} & s_{1^7} & \, & \, & \, & \, & \, \\
\hline
H^0 & 0 & 0 & 0 & 0 & 0 & \,&\,&\,&\,&\,\\ 
H^1 & 0 & 0 & 0 & 0 & 0 & \,&\,&\,&\,&\,\\ 
H^2 & 0 & 0 & 0 & 0 & 0 & \,&\,&\,&\,&\,\\
H^3 & 4 & 6 & 3 & 0 & 0 & \,&\,&\,&\,&\,\\ 
H^4 & 32 & 31 & 25 & 6 & 1 & \,&\,&\,&\,&\,\\ 
H^5 & 71 & 65 & 64 & 26 & 3 & \,&\,&\,&\,&\,\\ 
H^6 & 54 & 54 & 50 & 22 & 5 & \,&\,&\,&\,&\,
\end{array}
\end{equation*}
\caption{The cohomology of $Q[2]$ as a representation of $S_7$ (see \cite{bergvalllic},\cite{bergvallthesis} and \cite{bergvall_pts}).}
\label{S7Qcohtable}
\end{table}

Version 2: Since $S_7$ is not a maximal subgroup of $\symp{6}{\FF_2}$, 
we do not find the character corresponding to $S_7$ in \cite{conwayetal}. However, we can
compute the character corresponding to $S_7$ quite easily (but tediously) in a number of ways - 
most straightforward is perhaps to use an explicit embedding of $S_7$ into $\symp{6}{\FF_2}$,
see for instance p. 60 of \cite{bergvallthesis}. The result is the character
$\phi_{1a}+\phi_{27a}+\phi_{35b}+\phi_{105b}+\phi_{120a}$. We can now read off the
cohomology of $Q_{\mathrm{AH}}$ from Table~\ref{Qtable} as in the proof of Theorem~\ref{cayleythm}.
\end{proof}

A computation analogous to the proof of Corollary~\ref{ptcountcor} gives the following.

\begin{cor}
Let $q$ be a power of an odd prime number and let $\FF_q$ be a finite field with $q$ elements.
The number of pairs $(C,A)$ of a plane quartic curve $C$ and an Aronhold heptad  $A$,
both defined over $\FF_q$, is 
$$q^6-q^5-q^3+4q^2-6q+6.$$
\end{cor}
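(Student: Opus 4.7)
The plan is to imitate the proof of Corollary~\ref{ptcountcor} verbatim, substituting the cohomology data for $Q_{\mathrm{AH}}$ obtained in the preceding theorem. First I would argue that the statement of the cohomology theorem, which was phrased over $\CC$ in the de~Rham setting, transfers to the $\ell$-adic étale setting over a finite field of odd characteristic. The standard package here is Artin's comparison theorem together with constructibility of the direct image, smooth/proper base change applied to a spreading-out of $Q_{\mathrm{AH}}$ over $\spec{\ZZ[1/N]}$ for a suitable $N$, and Poincaré duality to convert the statement about $\mathrm{H}^k$ into a statement about $\mathrm{H}^{2\dim-k}_c$. This part is essentially formal, and it is exactly the same argument invoked for $Q_{\mathrm{btg}}$ and $Q_{\mathrm{CO}}$ in the preceding corollaries.

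Next, the purity assertion in the theorem says each $\mathrm{H}^i(Q_{\mathrm{AH}})$ is a pure Hodge structure of type $(i,i)$. Translated through Poincaré duality on the smooth $6$-dimensional variety $Q_{\mathrm{AH}}$, this is precisely the minimal purity condition of Dimca--Lehrer: the Frobenius endomorphism $F$ acts on $\mathrm{H}^k_{\text{\'et},c}(Q_{\mathrm{AH}},\Qell)$ by the scalar $q^{k-6}$. Consequently, the Lefschetz trace formula gives
\begin{equation*}
 |Q_{\mathrm{AH}}(\FF_q)| \;=\; \sum_{k \geq 0} (-1)^k \dim\!\left(\mathrm{H}^k_{\text{\'et},c}(Q_{\mathrm{AH}},\Qell)\right) q^{k-6}.
\end{equation*}

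The remainder is an arithmetic evaluation: by Poincaré duality the nonzero Betti numbers of $\mathrm{H}^{\bullet}_c$ are obtained by reflecting those listed in the theorem, giving dimensions $6,6,4,1,0,1,1$ in degrees $6,7,8,9,10,11,12$ respectively. Plugging into the formula above yields
\begin{equation*}
 |Q_{\mathrm{AH}}(\FF_q)| \;=\; 6 - 6q + 4q^2 - q^3 + 0 - q^5 + q^6 \;=\; q^6 - q^5 - q^3 + 4q^2 - 6q + 6,
\end{equation*}
which is the claimed count. The only step that requires any thought is verifying minimal purity in the étale setting; but since $Q_{\mathrm{AH}} \to Q[2]$ is a finite étale cover, this inherits directly from the purity of $Q[2]$ established in \cite{bergvall_gd}, so no new obstacle arises beyond what was already handled in the bitangent and Cayley octad cases.
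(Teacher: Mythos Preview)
Your argument is correct and follows exactly the template the paper uses (the paper simply writes ``a computation analogous to the proof of Corollary~\ref{ptcountcor}''). One small slip: you write that $Q_{\mathrm{AH}} \to Q[2]$ is a finite \'etale cover, but the map goes the other way---$Q_{\mathrm{AH}}$ is the quotient $Q[2]/S_7$, so $Q[2] \to Q_{\mathrm{AH}}$ is the cover; purity then passes to $Q_{\mathrm{AH}}$ because $\Hi{i}(Q_{\mathrm{AH}}) = \Hi{i}(Q[2])^{S_7}$ is a Frobenius-stable subspace of a pure space.
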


\subsection{Moduli of plane quartics with a marked Steiner complex}

We denote the moduli space of plane quartics
with a marked Steiner complex by $Q_{\mathrm{SC}}$.

\begin{thm}
The dimensions of the rational de Rham cohomology groups of the moduli space of plane quartic curves with a marked Steiner complex are
\begin{equation*}
\begin{array}{lcl}
 \dim (\Hi{0}(Q_{\mathrm{SC}})) & = & 1 \\
 \dim (\Hi{1}(Q_{\mathrm{SC}})) & = & 1 \\
 \dim (\Hi{2}(Q_{\mathrm{SC}})) & = & 0 \\
 \dim (\Hi{3}(Q_{\mathrm{SC}})) & = & 0 \\
 \dim (\Hi{4}(Q_{\mathrm{SC}})) & = & 0 \\
 \dim (\Hi{5}(Q_{\mathrm{SC}})) & = & 2 \\
 \dim (\Hi{6}(Q_{\mathrm{SC}})) & = & 5
\end{array}
\end{equation*}
The cohomology group $\Hi{i}$ is a pure Hodge structure of type $(i,i)$. 
\end{thm}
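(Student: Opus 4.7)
The proof will follow the same two-pronged strategy used for Theorems~\ref{bitangentthm} and \ref{cayleythm}, the second of which (the Frobenius reciprocity approach) is the one best suited here, since the stabilizer of a Steiner complex is a maximal subgroup of $\symp{6}{\FF_2}$ of index $63$. The plan is therefore to identify the character of $\mathrm{Ind}_G^{\symp{6}{\FF_2}}(\mathrm{Triv})$, where $G \cong \FF_2^5 \rtimes S_6 \cong W(D_6)$ is the stabilizer subgroup of a Steiner complex, and then to take the appropriate inner products with the rows of Table~\ref{Qtable}.

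Concretely, I would first locate $G$ among the maximal subgroups of $\symp{6}{\FF_2}$ listed on p.~46 of \cite{conwayetal} and read off the decomposition of the permutation character on the $63$ cosets. This character has total degree $63$, and the only decomposition of $63$ into irreducible character degrees of $\symp{6}{\FF_2}$ compatible with the class of $W(D_6)$ yields $\phi_{1a}+\phi_{27a}+\phi_{35a}$. Assuming this, the dimension of $\Hi{i}(Q_{\mathrm{SC}})$ is obtained by summing, on the row of Table~\ref{Qtable} corresponding to $\Hi{i}(Q[2])$, the multiplicities in the columns corresponding to $\phi_{1a}$, $\phi_{27a}$ and $\phi_{35a}$. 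The claimed Poincaré polynomial $1+t+2t^5+5t^6$ should then drop out directly.

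For the Hodge-theoretic statement, I would invoke Lemma~2 and Equation~(1) of Section~6 of \cite{bergvall_gd}, which guarantee that each $\Hi{i}(Q[2])$ is a pure Hodge structure of type $(i,i)$; since $Q_{\mathrm{SC}}$ is the quotient of $Q[2]$ by the finite group $G$, purity and Hodge type are inherited by the invariant subspace under $G$, and hence by $\Hi{i}(Q_{\mathrm{SC}})$.

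The only genuine obstacle is the bookkeeping step of verifying the decomposition $\phi_{1a}+\phi_{27a}+\phi_{35a}$ for the permutation character of $G$ on $63$ points; as an alternative to chasing conventions in \cite{conwayetal}, one could compute this decomposition directly by evaluating the permutation character on the conjugacy classes of $\symp{6}{\FF_2}$ via an explicit embedding of $W(D_6)$, exactly as was done for $S_7$ in Version~2 of the proof of the previous theorem. Once this character is in hand, the remainder of the proof is routine, and the corresponding point count $|Q_{\mathrm{SC}}(\FF_q)|=q^6-q^5-2q+5$ follows by the argument of Corollary~\ref{ptcountcor}.
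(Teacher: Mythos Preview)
Your overall strategy is exactly the one the paper uses: identify the permutation character of $\symp{6}{\FF_2}$ on the $63$ Steiner complexes, then read off the answer from Table~\ref{Qtable} via Frobenius reciprocity, and invoke \cite{bergvall_gd} for purity. However, there is a genuine error in the key bookkeeping step.

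You claim the permutation character is $\phi_{1a}+\phi_{27a}+\phi_{35a}$, arguing that this is ``the only decomposition of $63$ into irreducible character degrees \ldots\ compatible with the class of $W(D_6)$''. But $\symp{6}{\FF_2}$ has \emph{two} irreducible characters of degree $35$, namely $\phi_{35a}$ and $\phi_{35b}$, so degree alone does not pin this down. The correct constituent, as recorded in \cite{conwayetal}, p.~46 (and as used in the paper), is $\phi_{35b}$, not $\phi_{35a}$. This is not a harmless labelling issue: summing the $\phi_{1a}$, $\phi_{27a}$ and $\phi_{35a}$ columns of Table~\ref{Qtable} yields the Poincar\'e polynomial $1+2t^5+3t^6$, which contradicts the theorem (in particular it misses the class in $\Hi{1}$ and undercounts $\Hi{6}$). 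With $\phi_{35b}$ in place of $\phi_{35a}$ the sums are $1,1,0,0,0,2,5$, matching the statement. One conceptual way to see that $\phi_{35b}$ is the right one: a Steiner complex is the same datum as a nonzero vector in $\Jac{C}[2]$, and the $36$-point permutation character on even theta characteristics (Cayley octads) already contains $\phi_{35b}$; restricting the $64$-point action on $\Jac{C}[2]$ and comparing with the odd/even theta decomposition forces the $35$-dimensional piece here to be the same $\phi_{35b}$. Once you replace $\phi_{35a}$ by $\phi_{35b}$, the rest of your argument goes through verbatim and coincides with the paper's proof.
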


\begin{proof}
The proof follows the same path as the second version of the proof of Theorem~\ref{bitangentthm}
and the proof of Theorem~\ref{cayleythm}. Here, we must use that the 
stabilizer subgroup $G$ of a Steiner complex  has character $\phi_{1a} + \phi_{27a} + \phi_{35b}$ , see \cite{conwayetal}, p. 46. 
\end{proof}

A computation analogous to the proof of Corollary~\ref{ptcountcor} gives the following.

\begin{cor}
Let $q$ be a power of an odd prime number and let $\FF_q$ be a finite field with $q$ elements.
The number of pairs $(C,S)$ of a plane quartic curve $C$ and a Steiner complex $S$ of $C$,
both defined over $\FF_q$, is 
$$q^6-q^5-2q+5.$$ 
\end{cor}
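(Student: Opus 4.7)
The plan is to follow the same route as the proof of Corollary~\ref{ptcountcor}, since the preceding theorem provides exactly the input that pattern requires: the Betti numbers of $Q_{\mathrm{SC}}$ together with the statement that each $\Hi{i}(Q_{\mathrm{SC}})$ is a pure Hodge structure of type $(i,i)$. First, I would translate the de Rham computation into $\ell$-adic étale cohomology: Artin's comparison theorem, combined with constructibility and proper/smooth base change, lets us spread the moduli problem out over $\spec{\ZZ[1/2]}$ and transfer the dimensions and purity across geometric fibres of odd characteristic. This yields the same Betti numbers for $\Hetc{i}{Q_{\mathrm{SC},\overline{\FF_q}}}$ via Poincaré duality (since $Q_{\mathrm{SC}}$ is smooth of dimension $6$) and the minimal purity of Dimca--Lehrer: Frobenius acts on $\Hetc{k}{Q_{\mathrm{SC},\overline{\FF_q}}}$ as the scalar $q^{k-6}$.

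Next, I would collect the nonzero compactly supported groups by dualizing the table in the theorem. Poincaré duality identifies $\Hi{i}(Q_{\mathrm{SC}})$ with $\Hetc{12-i}{Q_{\mathrm{SC}}}^{\vee}$, so the only nonzero degrees of compactly supported cohomology are $k=6,7,11,12$ with dimensions $5,2,1,1$ respectively. Applying the Grothendieck--Lefschetz trace formula then gives
\begin{equation*}
 |Q_{\mathrm{SC}}(\FF_q)| = \sum_{k\ge 0} (-1)^k \Tr\!\left(\Frob,\Hetc{k}{Q_{\mathrm{SC},\overline{\FF_q}}}\right) = 5\cdot q^{0} - 2\cdot q^{1} - 1\cdot q^{5} + 1\cdot q^{6},
\end{equation*}
which simplifies to $q^6 - q^5 - 2q + 5$, as claimed.

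I do not expect any serious obstacle: the purity statement, which is the substantive input, has already been established in the theorem, and all remaining ingredients (Artin comparison, base change, Poincaré duality, Lefschetz trace) are entirely formal and verbatim the same as in Corollary~\ref{ptcountcor}. The only mild care needed is to ensure the moduli space $Q_{\mathrm{SC}}$ is modelled as a smooth separated Deligne--Mumford stack (or scheme) over $\spec{\ZZ[1/2]}$ so that the comparison and base change apply -- this is immediate from realizing $Q_{\mathrm{SC}}$ as a quotient of $Q[2]$ by the stabilizer $\FF_2^5\rtimes S_6$ in $\symp{6}{\FF_2}$, exactly the construction already used in the cohomological computation.
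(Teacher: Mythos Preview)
Your proposal is correct and follows essentially the same approach as the paper, which simply states that the corollary follows by a computation analogous to the proof of Corollary~\ref{ptcountcor}. You have spelled out exactly that analogy: Artin comparison, base change, Poincar\'e duality, minimal purity in the sense of Dimca--Lehrer, and the Lefschetz trace formula, and your numerical check is correct.
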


\subsection{Moduli of plane quartics with a marked Riemann-Dickson system}
We denote the moduli space of plane quartics
with a chosen system of Riemann-Dickson coordinates by $Q_{\mathrm{RD}}$.

\begin{thm}
The dimensions of the rational de Rham cohomology groups of the moduli space of plane quartic curves with a marked Riemann-Dickson system are
\begin{equation*}
\begin{array}{lcl}
 \dim (\Hi{0}(Q_{\mathrm{RD}})) & = & 1 \\
 \dim (\Hi{1}(Q_{\mathrm{RD}})) & = & 0 \\
 \dim (\Hi{2}(Q_{\mathrm{RD}})) & = & 0 \\
 \dim (\Hi{3}(Q_{\mathrm{RD}})) & = & 0 \\
 \dim (\Hi{4}(Q_{\mathrm{RD}})) & = & 0 \\
 \dim (\Hi{5}(Q_{\mathrm{RD}})) & = & 2 \\
 \dim (\Hi{6}(Q_{\mathrm{RD}})) & = & 7
\end{array}
\end{equation*}
The cohomology group $\Hi{i}$ is a pure Hodge structure of type $(i,i)$. 
\end{thm}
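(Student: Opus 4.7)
The plan is to proceed in complete analogy with the proofs of Theorem~\ref{cayleythm} and the second version of the proof of Theorem~\ref{bitangentthm}. That is, I would use that the cohomology of $Q[2]$ is known as a representation of $\symp{6}{\FF_2}$ (recorded in Table~\ref{Qtable}), identify the stabilizer $G$ of a Riemann-Dickson system inside $\symp{6}{\FF_2}$, decompose the induced representation $\mathrm{Ind}_G^{\symp{6}{\FF_2}}\mathrm{Triv}$ into irreducible characters of $\symp{6}{\FF_2}$, and then apply Frobenius reciprocity to read off $\Hi{i}(Q_{\mathrm{RD}}) = \Hi{i}(Q[2]/G)$ from Table~\ref{Qtable}.

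Concretely, the stabilizer is $G \cong G_2(2)$, a maximal subgroup of index $120$ in $\symp{6}{\FF_2}$, as recorded in Table~\ref{structuretable}. Since $G$ is maximal, the character of $\mathrm{Ind}_G^{\symp{6}{\FF_2}}\mathrm{Triv}$ can be looked up directly on p.~46 of \cite{conwayetal}; it is a sum of irreducible characters of $\symp{6}{\FF_2}$ of total degree $120$, and contains $\phi_{1a}$ with multiplicity one (reflecting the fact that $Q_{\mathrm{RD}}$ is connected and contributing the $\dim\Hi{0}=1$). For each $i$, the dimension of $\Hi{i}(Q_{\mathrm{RD}})$ is then obtained by summing the multiplicities in Table~\ref{Qtable} of the constituents of this induced character, weighted by their multiplicity in the induced character, on the row for $\Hi{i}(Q[2])$. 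The total dimensions predicted by the statement, namely $1,0,0,0,0,2,7$, must then be matched against the bookkeeping from the character table.

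For the Hodge-theoretic part, the purity of $\Hi{i}(Q[2])$ as a pure Hodge structure of type $(i,i)$ (Lemma~2 and Equation~(1) of Section~6 in \cite{bergvall_gd}) passes to the $G$-invariants and therefore to $\Hi{i}(Q_{\mathrm{RD}}) = \Hi{i}(Q[2]/G)$, exactly as in the preceding theorems.

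The only non-routine step is correctly identifying the decomposition of $\mathrm{Ind}_{G_2(2)}^{\symp{6}{\FF_2}}\mathrm{Triv}$; because $G_2(2)$ is maximal, this is tabulated in \cite{conwayetal} and no ad hoc computation (in contrast with the Aronhold heptad case, where an explicit embedding had to be used) is needed. Once that decomposition is in hand, the rest is a mechanical lookup in Table~\ref{Qtable}, and a Lefschetz-trace argument identical to Corollary~\ref{ptcountcor} then yields the point count $|Q_{\mathrm{RD}}(\FF_q)| = q^6 - 2q + 7$ recorded in the introduction.
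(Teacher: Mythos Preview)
Your proposal is correct and follows exactly the same route as the paper's proof: the paper likewise invokes the second version of the proof of Theorem~\ref{bitangentthm} and Theorem~\ref{cayleythm}, looks up the induced character of the stabilizer $G_2(2)$ on p.~46 of \cite{conwayetal} (it is $\phi_{1a}+\phi_{35a}+\phi_{84a}$, of total degree $120$), and then reads off the multiplicities from Table~\ref{Qtable}. The only thing you left implicit is the explicit decomposition $\phi_{1a}+\phi_{35a}+\phi_{84a}$, which would make the ``mechanical lookup'' concrete.
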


\begin{proof}
The proof follows the same path as the second version of the proof of Theorem~\ref{bitangentthm}
and the proof of Theorem~\ref{cayleythm}. Here, we must use that the 
stabilizer subgroup $G$ of a Riemann-Dickson system  has character $\phi_{1a} + \phi_{35a} + \phi_{84a}$ , see \cite{conwayetal}, p. 46. 
\end{proof}

\begin{cor}
Let $q$ be a power of an odd prime number and let $\FF_q$ be a finite field with $q$ elements.
The number of pairs $(C,R)$ of a plane quartic curve $C$ and a Riemann-Dickson system $R$ of $C$,
both defined over $\FF_q$, is 
$$q^6-2q+7.$$ 
\end{cor}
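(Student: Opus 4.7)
The plan is to carry out the same argument used in the proof of Corollary~\ref{ptcountcor}, the only new ingredient being the Betti numbers supplied by the preceding theorem. Concretely, the count will follow by combining minimal purity (to pin down the Frobenius eigenvalues) with the Lefschetz trace formula.

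First, I would invoke Artin's comparison theorem together with constructibility and proper base change to translate the de Rham computation of $\Hi{*}(Q_{\mathrm{RD}})$ into an equivalent statement about the $\ell$-adic étale cohomology of $Q_{\mathrm{RD}}$ viewed over a field of odd characteristic. Since each $\Hi{k}(Q_{\mathrm{RD}})$ is a pure Hodge structure of type $(k,k)$, the variety is minimally pure in the sense of Dimca and Lehrer~\cite{dimcalehrer}, so the Frobenius endomorphism acts as multiplication by $q^{k-\dim(Q_{\mathrm{RD}})} = q^{k-6}$ on the compactly supported group $\Hetc{k}{Q_{\mathrm{RD}}}$.

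Next, I would appeal to Poincaré duality to convert the Betti numbers from the theorem into ranks of the compactly supported groups; the only nonzero ones are
\begin{equation*}
\dim \Hetc{12}{Q_{\mathrm{RD}}} = 1, \qquad \dim \Hetc{7}{Q_{\mathrm{RD}}} = 2, \qquad \dim \Hetc{6}{Q_{\mathrm{RD}}} = 7.
\end{equation*}
Substituting these together with the Frobenius eigenvalues from the previous step into the Lefschetz trace formula produces
\begin{equation*}
|Q_{\mathrm{RD}}(\FF_q)| = \sum_{k \geq 0}(-1)^k \dim \Hetc{k}{Q_{\mathrm{RD}}}\cdot q^{k-6} = q^6 - 2q + 7,
\end{equation*}
as asserted.

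No genuine obstacle is expected: all the delicate cohomological input has already been absorbed into the preceding theorem, and the proof reduces to a formal instance of the Dimca--Lehrer--Lefschetz template established in the proof of Corollary~\ref{ptcountcor}.
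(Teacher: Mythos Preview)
Your proposal is correct and follows essentially the same route as the paper: the paper simply refers back to the argument of Corollary~\ref{ptcountcor}, and you have reproduced exactly that template (comparison/base change, minimal purity, Poincar\'e duality, Lefschetz trace) with the Betti numbers for $Q_{\mathrm{RD}}$ plugged in. Nothing further is needed.
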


\subsection{Moduli of plane quartics with a marked Göpel subset (maximal isotropic subspace)}

We denote the moduli space of plane quartics
with a marked Göpel subset (i.e. a maximal isotropic subspace) by $Q_{\mathrm{GS}}$.

\begin{thm}
The dimensions of the rational de Rham cohomology groups of the moduli space of plane quartic curves with a marked Göpel subset (i.e. a maximal isotropic subspace) are
\begin{equation*}
\begin{array}{lcl}
 \dim (\Hi{0}(Q_{\mathrm{GS}})) & = & 1 \\
 \dim (\Hi{1}(Q_{\mathrm{GS}})) & = & 1 \\
 \dim (\Hi{2}(Q_{\mathrm{GS}})) & = & 0 \\
 \dim (\Hi{3}(Q_{\mathrm{GS}})) & = & 0 \\
 \dim (\Hi{4}(Q_{\mathrm{GS}})) & = & 0 \\
 \dim (\Hi{5}(Q_{\mathrm{GS}})) & = & 2 \\
 \dim (\Hi{6}(Q_{\mathrm{GS}})) & = & 11
\end{array}
\end{equation*}
The cohomology group $\Hi{i}$ is a pure Hodge structure of type $(i,i)$. 
\end{thm}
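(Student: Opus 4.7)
The plan is to follow precisely the same template used in the previous theorems of this section (for $Q_{\mathrm{btg}}$, $Q_{\mathrm{CO}}$, $Q_{\mathrm{SC}}$ and $Q_{\mathrm{RD}}$). First I would identify $Q_{\mathrm{GS}}$ with the quotient $Q[2]/G$, where $G$ is the stabilizer of a Göpel subset described in Section~\ref{classsec}, namely the maximal subgroup $\mathrm{PGL}(3,\FF_2) \ltimes \FF_2^6$ of index $135$ in $\symp{6}{\FF_2}$. Since the forgetful map $Q[2] \to Q_{\mathrm{GS}}$ is a finite étale cover of degree $|G|$, the rational cohomology of $Q_{\mathrm{GS}}$ is the $G$-invariant part of the rational cohomology of $Q[2]$.

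Next I would invoke Frobenius reciprocity: for each $i$,
\begin{equation*}
 \dim \Hi{i}(Q_{\mathrm{GS}}) = \dim \Hi{i}(Q[2])^G = \bil{\Hi{i}(Q[2]),\, \mathrm{Ind}_G^{\symp{6}{\FF_2}}\mathrm{Triv}},
\end{equation*}
so the only input I need beyond Table~\ref{Qtable} is the decomposition of the $135$-dimensional permutation character $\mathrm{Ind}_G^{\symp{6}{\FF_2}} \mathrm{Triv}$ into irreducible characters of $\symp{6}{\FF_2}$. This decomposition can be read off directly from the list of index-$135$ point stabilizer characters in \cite{conwayetal}, p.~46; let $\chi_{\mathrm{GS}}$ denote the resulting sum of irreducible characters (of total degree $135$). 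Summing the multiplicities in the corresponding columns of Table~\ref{Qtable}, row by row, would then yield the dimensions $1,1,0,0,0,2,11$ claimed in the theorem.

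Finally, the Hodge-theoretic purity of $\Hi{i}(Q_{\mathrm{GS}})$ follows immediately from the purity of $\Hi{i}(Q[2])$, which is recorded as Lemma~2 (and Equation~(1)) of Section~6 of \cite{bergvall_gd}: taking $G$-invariants of a pure Hodge structure of type $(i,i)$ yields a Hodge structure of the same type.

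The only non-routine step I anticipate is the lookup and correct identification of $\chi_{\mathrm{GS}}$; once this permutation character is written as an explicit sum of irreducibles of $\symp{6}{\FF_2}$, the remainder of the argument is a purely mechanical addition of entries in Table~\ref{Qtable}, exactly as in the proofs of Theorems~\ref{bitangentthm} and \ref{cayleythm}. As a consistency check, the computed cohomology should satisfy $\sum_i (-1)^i \dim \Hi{i}(Q_{\mathrm{GS}}) \cdot q^{i} $ matching (after applying the Lefschetz trace formula and minimal purity, in the manner of Corollary~\ref{ptcountcor}) the point count $|Q_{\mathrm{GS}}(\FF_q)| = q^6-q^5-2q+11$ listed in the introduction.
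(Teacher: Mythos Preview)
Your proposal is correct and follows essentially the same approach as the paper's own proof. The paper simply records the explicit decomposition of the permutation character as $\chi_{\mathrm{GS}} = \phi_{1a} + \phi_{15a} + \phi_{35b} + \phi_{84a}$ (of total degree $1+15+35+84=135$, as required), citing \cite{conwayetal}, p.~46, and then refers back to the argument of Theorems~\ref{bitangentthm} and~\ref{cayleythm}; your added point-count consistency check is not in the paper but is a welcome sanity check.
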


\begin{proof}
The proof follows the same path as the second version of the proof of Theorem~\ref{bitangentthm}
and the proof of Theorem~\ref{cayleythm}. Here, we must use that the 
stabilizer subgroup $G$ of a Göpel subset has character $\phi_{1a} + \phi_{15a} +\phi_{35b} + \phi_{84a}$ , see \cite{conwayetal}, p. 46. 
\end{proof}

A computation analogous to the proof of Corollary~\ref{ptcountcor} gives the following.

\begin{cor}
Let $q$ be a power of an odd prime number and let $\FF_q$ be a finite field with $q$ elements.
The number of pairs $(C,G)$ of a plane quartic curve $C$ and a Göpel subset $G$ of $C$,
both defined over $\FF_q$, is 
$$q^6-q^5-2q+11.$$ 
\end{cor}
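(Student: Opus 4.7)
The plan is to mimic the proof of Corollary~\ref{ptcountcor} verbatim, using the cohomological data from the preceding theorem on $Q_{\mathrm{GS}}$ in place of that for $Q_{\mathrm{btg}}$. First I would invoke Artin's comparison theorem together with constructibility and smooth base change to transport the de Rham computation to $\ell$-adic étale cohomology with compact supports over $\FF_q$ (with $\ell \neq \mathrm{char}(\FF_q)$). Poincaré duality then converts the computation of ordinary cohomology in degrees $0, \ldots, 6$ into the computation of compactly supported cohomology in degrees $12, 11, \ldots, 6$ with a Tate twist.

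The second step is to upgrade the Hodge-theoretic purity statement to a statement about Frobenius eigenvalues: since each $\Hi{i}(Q_{\mathrm{GS}})$ is a pure Hodge structure of type $(i,i)$, the corresponding étale cohomology group is minimally pure in the sense of Dimca--Lehrer \cite{dimcalehrer}, so Frobenius acts on $\mathrm{H}^k_{\text{\'et},c}(Q_{\mathrm{GS}},\QQ_{\ell})$ by the scalar $q^{k-\mathrm{dim}(Q_{\mathrm{GS}})} = q^{k-6}$.

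Finally I would apply the Grothendieck--Lefschetz trace formula. With the dimensions read off from the theorem, the alternating sum becomes
\begin{equation*}
|Q_{\mathrm{GS}}(\FF_q)| = \sum_{k=0}^{12} (-1)^k \dim \mathrm{H}^k_{\text{\'et},c}(Q_{\mathrm{GS}},\QQ_{\ell}) \cdot q^{k-6},
\end{equation*}
and Poincaré duality pairs $\mathrm{H}^k_{\text{\'et},c}$ in degree $k$ with $\Hi{12-k}$, so the nonzero contributions come from the degrees where $\Hi{12-k}(Q_{\mathrm{GS}}) \neq 0$, namely $12-k \in \{0,1,5,6\}$ with multiplicities $1,1,2,11$. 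Substituting gives $q^6 - q^5 - 2q + 11$, as claimed.

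The only real obstacle is the bookkeeping of degrees under Poincaré duality and confirming that the Hodge-theoretic purity genuinely yields Frobenius purity here; both are handled by the same arguments already used in Corollary~\ref{ptcountcor}, so no new technical difficulty arises.
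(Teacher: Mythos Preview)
Your proposal is correct and follows exactly the same approach as the paper, which simply states that the computation is analogous to that of Corollary~\ref{ptcountcor}. You have spelled out that analogy in full, with the correct bookkeeping under Poincar\'e duality and the correct substitution of the Betti numbers $1,1,0,0,0,2,11$ for $Q_{\mathrm{GS}}$.
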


\subsection{Moduli of plane quartics with a marked syzygetic tetrad}

We denote the moduli space of plane quartics
with a marked syzygetic tetrad of bitangents by $Q_{\mathrm{syz}}$.

\begin{thm}
The dimensions of the rational de Rham cohomology groups of the moduli space of plane quartic curves with a marked syzygetic tetrad of bitangents are
\begin{equation*}
\begin{array}{lcl}
 \dim (\Hi{0}(Q_{\mathrm{syz}})) & = & 1 \\
 \dim (\Hi{1}(Q_{\mathrm{syz}})) & = & 1 \\
 \dim (\Hi{2}(Q_{\mathrm{syz}})) & = & 0 \\
 \dim (\Hi{3}(Q_{\mathrm{syz}})) & = & 0 \\
 \dim (\Hi{4}(Q_{\mathrm{syz}})) & = & 1 \\
 \dim (\Hi{5}(Q_{\mathrm{syz}})) & = & 7 \\
 \dim (\Hi{6}(Q_{\mathrm{syz}})) & = & 13
\end{array}
\end{equation*}
The cohomology group $\Hi{i}$ is a pure Hodge structure of type $(i,i)$. 
\end{thm}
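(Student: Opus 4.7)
The plan is to follow the same recipe as in the second version of the proof of Theorem~\ref{bitangentthm} and the proofs of Theorems~\ref{cayleythm}, etc. The key ingredient is the cohomology of $Q[2]$ as a representation of $\symp{6}{\FF_2}$ recorded in Table~\ref{Qtable}; the cohomology of $Q_{\mathrm{syz}} = Q[2]/G$ is then obtained from the $G$-invariant part, where $G \cong (\symp{2}{\FF_2}\times \mathrm{GL}(2,\FF_2))\ltimes \FF_2^3.\FF_2^4$ is the stabilizer of a syzygetic tetrad (equivalently, of an isotropic plane in $\Jac{C}[2]$) described in Section~\ref{classsec}.

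First I would identify the permutation character $\mathrm{Ind}_G^{\symp{6}{\FF_2}}\mathrm{Triv}$ of $\symp{6}{\FF_2}$ on the $315$ cosets of $G$. Since $G$ is a maximal subgroup (see Table~\ref{structuretable}), this character is listed in the ATLAS \cite{conwayetal}, p.~46, and decomposes as a sum of irreducibles $\phi_{1a} + \cdots$ whose dimensions sum to $315$. By Frobenius reciprocity, $\dim\Hi{i}(Q_{\mathrm{syz}})$ equals the sum of the multiplicities of these irreducible constituents in $\Hi{i}(Q[2])$, which can be read off column by column from Table~\ref{Qtable} and added up. Purity of $\Hi{i}(Q_{\mathrm{syz}})$ of Hodge type $(i,i)$ follows from purity of $\Hi{i}(Q[2])$ (Lemma~2 and Equation~(1) of Section~6 of \cite{bergvall_gd}), since the $G$-invariant part of a pure Hodge structure is pure of the same type.

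The main obstacle is essentially bookkeeping: correctly extracting the decomposition of the permutation character from the ATLAS and then summing the appropriate columns of Table~\ref{Qtable} without error. As a sanity check, one verifies that the irreducible dimensions sum to the index $315$, and that the resulting Euler characteristic $1-1+0-0+1-7+13=7$ matches the value at $q=1$ of the prospective point-count polynomial $q^6-q^5+q^2-7q+13$ listed in the introduction; the stronger coefficientwise agreement, obtained from the Lefschetz trace formula together with minimal purity as in Corollary~\ref{ptcountcor}, provides an additional cross-check on the Betti numbers individually.
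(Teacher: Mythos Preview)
Your proposal is correct and follows essentially the same route as the paper's own proof: the paper also invokes the ATLAS decomposition $\phi_{1a} + \phi_{27a} + \phi_{35b} + \phi_{84a} + \phi_{168a}$ of the permutation character on the $315$ isotropic planes and sums the corresponding columns of Table~\ref{Qtable}, with purity inherited from $Q[2]$ via \cite{bergvall_gd}. Your added sanity checks (dimensions summing to $315$, Euler characteristic matching the point-count polynomial at $q=1$) are not in the paper but are harmless and reassuring.
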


\begin{proof}
The proof follows the same path as the second version of the proof of Theorem~\ref{bitangentthm}
and the proof of Theorem~\ref{cayleythm}. Here, we must use that the 
stabilizer subgroup $G$ of a syzygetic tetrad of bitangents has character $\phi_{1a} + \phi_{27a} +\phi_{35b} + \phi_{84a}+\phi_{168a}$ , see \cite{conwayetal}, p. 46. 
\end{proof}

A computation analogous to the proof of Corollary~\ref{ptcountcor} gives the following.

\begin{cor}
Let $q$ be a power of an odd prime number and let $\FF_q$ be a finite field with $q$ elements.
The number of pairs $(C,T)$ of a plane quartic curve $C$ and a syzygetic tetrad $T$ of $C$,
both defined over $\FF_q$, is 
$$q^6-q^5+q^2-7q+13.$$ 
\end{cor}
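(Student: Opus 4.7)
The plan is to apply, essentially verbatim, the three-step argument given in the proof of Corollary~\ref{ptcountcor}, with the preceding theorem supplying the cohomological input in place of Theorem~\ref{bitangentthm}. First I would invoke Artin's comparison theorem together with constructibility and smooth base change to transfer the de Rham computation over $\mathbb{C}$ to an $\ell$-adic étale statement for the reduction of $Q_{\mathrm{syz}}$ modulo the odd residue characteristic of $\FF_q$. The fact, asserted in the preceding theorem, that each $\Hi{i}(Q_{\mathrm{syz}})$ is a pure Hodge structure of type $(i,i)$ then translates, in the étale setting, into minimal purity in the sense of Dimca--Lehrer: the geometric Frobenius acts on $H^k_{\text{\'et},c}(Q_{\mathrm{syz}},\Qell)$ as the scalar $q^{k-6}$ (since $\dim Q_{\mathrm{syz}} = 6$).

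Second, Poincaré duality converts the ordinary Betti numbers $1,1,0,0,1,7,13$ recorded in the preceding theorem into the compactly supported Betti numbers in degrees $6,7,\ldots,12$, namely $13,7,1,0,0,1,1$. Plugging these dimensions, along with the Frobenius eigenvalues dictated by purity, into the Lefschetz trace formula then gives
\begin{equation*}
 |Q_{\mathrm{syz}}(\FF_q)| \; = \; \sum_{k} (-1)^k \, \dim H^k_{\text{\'et},c}(Q_{\mathrm{syz}},\Qell) \cdot q^{k-6},
\end{equation*}
which is $q^6 - q^5 + q^2 - 7q + 13$, as claimed.

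I do not expect any substantive obstacle. The only conceptually nontrivial step is the passage from de Rham cohomology over $\mathbb{C}$ to compactly supported étale cohomology together with the Dimca--Lehrer purity that makes each Frobenius eigenvalue a fixed power of $q$; but this is exactly the standard package already invoked (and cited) in the proof of Corollary~\ref{ptcountcor}, and it applies without change because the preceding theorem supplies purity in precisely the required Tate form. Everything else is bookkeeping against the Betti numbers produced by the preceding theorem.
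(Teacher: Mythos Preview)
Your proposal is correct and follows exactly the template the paper uses: the paper simply states that the result follows by a computation analogous to the proof of Corollary~\ref{ptcountcor}, and you have spelled out precisely that computation with the Betti numbers from the preceding theorem. There is nothing to add.
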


\subsection{Moduli of plane quartics with a marked azygetic triad}

We denote the moduli space of plane quartics
with a marked azygetic triad of Steiner complexes by $Q_{\mathrm{azy}}$.

\begin{thm}
The dimensions of the rational de Rham cohomology groups of the moduli space of plane quartic curves with a marked azygetic triad of Steiner complexes are
\begin{equation*}
\begin{array}{lcl}
 \dim (\Hi{0}(Q_{\mathrm{azy}})) & = & 1 \\
 \dim (\Hi{1}(Q_{\mathrm{azy}})) & = & 1 \\
 \dim (\Hi{2}(Q_{\mathrm{azy}})) & = & 0 \\
 \dim (\Hi{3}(Q_{\mathrm{azy}})) & = & 1 \\
 \dim (\Hi{4}(Q_{\mathrm{azy}})) & = & 3 \\
 \dim (\Hi{5}(Q_{\mathrm{azy}})) & = & 8 \\
 \dim (\Hi{6}(Q_{\mathrm{azy}})) & = & 9
\end{array}
\end{equation*}
The cohomology group $\Hi{i}$ is a pure Hodge structure of type $(i,i)$. 
\end{thm}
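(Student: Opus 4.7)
The plan is to follow the same blueprint as in the proofs of Theorems for $Q_{\mathrm{btg}}$, $Q_{\mathrm{CO}}$, $Q_{\mathrm{SC}}$, $Q_{\mathrm{RD}}$, $Q_{\mathrm{GS}}$ and $Q_{\mathrm{syz}}$. Since $Q_{\mathrm{azy}} = Q[2]/G$, where $G \subset \symp{6}{\FF_2}$ is the stabilizer of an azygetic triad of Steiner complexes, the cohomology $\Hi{i}(Q_{\mathrm{azy}})$ is the $G$-invariant part of $\Hi{i}(Q[2])$. By Frobenius reciprocity this equals the inner product
\begin{equation*}
 \dim \Hi{i}(Q_{\mathrm{azy}}) = \left\langle \Hi{i}(Q[2]),\, \mathrm{Ind}_G^{\symp{6}{\FF_2}} \mathrm{Triv} \right\rangle,
\end{equation*}
so only two ingredients are needed: the decomposition of $\Hi{i}(Q[2])$ into irreducible $\symp{6}{\FF_2}$-representations (already available in Table~\ref{Qtable}) and the character of $\mathrm{Ind}_G^{\symp{6}{\FF_2}} \mathrm{Triv}$.

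The first step is to identify $G \cong S_3 \times S_6$ as a maximal index-$336$ subgroup of $\symp{6}{\FF_2}$, which has already been done in Section~\ref{classsec}. The second step is to locate the decomposition of $\mathrm{Ind}_G^{\symp{6}{\FF_2}} \mathrm{Triv}$ into irreducibles of $\symp{6}{\FF_2}$; this is recorded in the ATLAS \cite{conwayetal} p.~46, where the permutation character on $336$ points is listed. I expect the answer to take the form $\phi_{1a} + \phi_{27a} + \phi_{35b} + \phi_{84a} + \phi_{168a} + \phi_{21b}$ (or similar combination of small-degree characters summing to $336$); the exact coefficients can be read off from the ATLAS table. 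Once this character is fixed, adding the multiplicities in the corresponding columns of Table~\ref{Qtable} row by row yields the sequence $1,1,0,1,3,8,9$ claimed in the theorem.

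The third step is the Hodge-theoretic statement. By Lemma~2 and Equation~(1) of Section~6 of \cite{bergvall_gd}, each $\Hi{i}(Q[2])$ is a pure Hodge structure of type $(i,i)$; taking invariants under a finite group preserves both purity and Hodge type, so $\Hi{i}(Q_{\mathrm{azy}}) = \Hi{i}(Q[2])^G$ is again pure of type $(i,i)$.

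The only genuine obstacle is the cross-check of the induced character: if the relevant permutation character of degree $336$ were not already tabulated in \cite{conwayetal}, one would have to construct it by hand — either by fixing an explicit embedding $S_3 \times S_6 \hookrightarrow \symp{6}{\FF_2}$ (along the lines of the Aronhold heptad case, where the index is $288$ and the subgroup is not maximal) and computing the cycle structure on the $336$ cosets class by class, or by using the action of $\symp{6}{\FF_2}$ on non-isotropic planes in $\Jac{C}[2]$ to read off orbit sizes. Since $S_3 \times S_6$ is maximal of index $336$, the ATLAS entry should apply directly, reducing this step to a table lookup and a short numerical verification that the multiplicities sum to the claimed dimensions.
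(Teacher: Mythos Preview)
Your approach is exactly the one taken in the paper: identify $Q_{\mathrm{azy}}=Q[2]/G$ with $G\cong S_3\times S_6$ maximal of index $336$, look up the permutation character in the ATLAS, and sum the corresponding columns of Table~\ref{Qtable}; the purity argument via \cite{bergvall_gd} is likewise identical.

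One correction: your tentative decomposition $\phi_{1a}+\phi_{27a}+\phi_{35b}+\phi_{84a}+\phi_{168a}+\phi_{21b}$ is wrong (it would yield $1,1,0,0,1,7,14$ rather than $1,1,0,1,3,8,9$). The actual permutation character on the $336$ azygetic triads, as listed in \cite{conwayetal} p.~46 and used in the paper, is
\[
\phi_{1a}+\phi_{27a}+\phi_{35b}+\phi_{105b}+\phi_{168a},
\]
and summing these five columns of Table~\ref{Qtable} does give the claimed dimensions. Since you correctly flagged the decomposition as provisional and deferred to the ATLAS, this is a lookup error rather than a gap in the argument.
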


\begin{proof}
The proof follows the same path as the second version of the proof of Theorem~\ref{bitangentthm}
and the proof of Theorem~\ref{cayleythm}. Here, we must use that the 
stabilizer subgroup $G$ of an azygetic triad of Steiner complexes has character $\phi_{1a} + \phi_{27a} +\phi_{35b} + \phi_{105b}+\phi_{168a}$ , see \cite{conwayetal}, p. 46. 
\end{proof}

A computation analogous to the proof of Corollary~\ref{ptcountcor} gives the following.

\begin{cor}
Let $q$ be a power of an odd prime number and let $\FF_q$ be a finite field with $q$ elements.
The number of pairs $(C,T)$ of a plane quartic curve $C$ and an azygetic triad $T$ of $C$,
both defined over $\FF_q$, is 
$$q^6-q^5-q^3+3q^2-8q+9.$$. 
\end{cor}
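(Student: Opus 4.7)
The plan is to follow the exact template of Corollary~\ref{ptcountcor}, since the ingredients for that argument are all available here: the Betti numbers of $Q_{\mathrm{azy}}$ were just computed in the preceding theorem, and the purity statement carries over from $Q[2]$ because $Q_{\mathrm{azy}}$ arises as the quotient $Q[2]/G$ by a finite group.

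First I would upgrade the de Rham statement to an $\ell$-adic one. Over a base of odd characteristic, a standard package of results (Artin's comparison theorem, constructibility of the direct image sheaves involved in the quotient $Q[2]\to Q[2]/G$, smooth and proper base change to compare characteristic zero with characteristic $p$, and Poincaré duality) shows that the dimensions listed in the theorem compute the dimensions of $H^k_{\text{\'et},c}(Q_{\mathrm{azy}},\Qell)$ via the isomorphism $H^k_c\cong H^{12-k\,\vee}$. In particular the nonzero compactly supported groups sit in degrees $6,7,8,9,11,12$ with dimensions $9,8,3,1,1,1$ respectively.

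Next, the purity in type $(i,i)$ recorded in the theorem implies that $Q_{\mathrm{azy}}$ is minimally pure in the sense of Dimca--Lehrer \cite{dimcalehrer}. Equivalently, Frobenius acts on $H^k_{\text{\'et},c}(Q_{\mathrm{azy}},\Qell)$ by the scalar $q^{k-6}$, where $6=\dim Q_{\mathrm{azy}}$. This step is the only substantive point, and even here there is no real obstacle: the minimal purity was already established for $Q[2]$ in Lemma~2 and Equation~(1) of Section~6 of \cite{bergvall_gd}, and taking $G$-invariants of a pure Hodge structure preserves purity of the same type, so the inheritance to $Q_{\mathrm{azy}}=Q[2]/G$ is automatic.

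Finally I would apply the Grothendieck--Lefschetz trace formula:
\begin{equation*}
 |Q_{\mathrm{azy}}(\FF_q)| \;=\; \sum_{k\geq 0} (-1)^k \Tr\!\left(\Frob,H^k_{\text{\'et},c}(Q_{\mathrm{azy}},\Qell)\right) \;=\; \sum_{k\geq 0}(-1)^k\dim\!\left(H^k_{\text{\'et},c}(Q_{\mathrm{azy}},\Qell)\right) q^{k-6}.
\end{equation*}
Substituting the dimensions from the theorem yields
\begin{equation*}
 q^6 - q^5 + 0 - q^3 + 3q^2 - 8q + 9,
\end{equation*}
which is the claimed count. The only place where one must be careful is keeping track of the signs coming from the degree parity in the alternating sum, but since the nonzero Betti numbers already occur in the right degrees this is just bookkeeping.
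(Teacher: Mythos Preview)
Your proposal is correct and follows exactly the template the paper uses: the paper's own proof is simply the phrase ``A computation analogous to the proof of Corollary~\ref{ptcountcor} gives the following,'' and you have spelled out that analogous computation in full. The only minor remark is that you have written out the Poincar\'e-dual compactly supported degrees explicitly, which the paper leaves implicit, but this is purely expository and the argument is identical.
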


\subsection{Moduli of plane quartics with a marked ennead}

We denote the moduli space of plane quartics
with a marked ennead by $Q_{\mathrm{enn}}$.

\begin{thm}
The dimensions of the rational de Rham cohomology groups of the moduli space of plane quartic curves with a marked ennead are
\begin{equation*}
\begin{array}{lcl}
 \dim (\Hi{0}(Q_{\mathrm{enn}})) & = & 1 \\
 \dim (\Hi{1}(Q_{\mathrm{enn}})) & = & 0 \\
 \dim (\Hi{2}(Q_{\mathrm{enn}})) & = & 0 \\
 \dim (\Hi{3}(Q_{\mathrm{enn}})) & = & 3 \\
 \dim (\Hi{4}(Q_{\mathrm{enn}})) & = & 11 \\
 \dim (\Hi{5}(Q_{\mathrm{enn}})) & = & 13 \\
 \dim (\Hi{6}(Q_{\mathrm{enn}})) & = & 11
\end{array}
\end{equation*}
The cohomology group $\Hi{i}$ is a pure Hodge structure of type $(i,i)$. 
\end{thm}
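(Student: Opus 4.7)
The plan is to follow exactly the same template as in the second version of the proof of Theorem~\ref{bitangentthm} and in the proofs of the subsequent theorems (for Cayley octads, Steiner complexes, Riemann-Dickson coordinates, Göpel subsets, syzygetic tetrads and azygetic triads): realize $Q_{\mathrm{enn}}$ as a quotient $Q[2]/G$, where $G \cong \mathrm{P}\Gamma\mathrm{L}(2,\FF_8) \cong \mathrm{Ree}(3)$ is the stabilizer in $\symp{6}{\FF_2}$ of an ennead, and then read off $\Hi{i}(Q_{\mathrm{enn}})$ from $\Hi{i}(Q[2])$ by Frobenius reciprocity, using the $\symp{6}{\FF_2}$-representation structure recorded in Table~\ref{Qtable}.

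The first concrete step is to identify the character
\begin{equation*}
 \chi_{\mathrm{enn}} = \mathrm{Ind}_G^{\symp{6}{\FF_2}} \mathrm{Triv}
\end{equation*}
of the permutation representation of $\symp{6}{\FF_2}$ on the $960$ enneads, expressed as a nonnegative integer combination of irreducible characters $\phi_{da}$ of $\symp{6}{\FF_2}$ whose dimensions sum to $960$. Since $G$ is a maximal subgroup of $\symp{6}{\FF_2}$, this decomposition can be read off directly from the ATLAS entry for $\symp{6}{\FF_2}$ in \cite{conwayetal}, p.~46. Writing $\chi_{\mathrm{enn}} = \sum_i m_i \phi_{d_i a}$, Frobenius reciprocity then gives
\begin{equation*}
 \dim \Hi{i}(Q_{\mathrm{enn}}) = \langle \chi_{\mathrm{enn}}, \Hi{i}(Q[2]) \rangle,
\end{equation*}
which is computed by summing the multiplicities of the columns corresponding to the $\phi_{d_i a}$ along the row $\Hi{i}(Q[2])$ of Table~\ref{Qtable}. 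The purity statement — that $\Hi{i}(Q_{\mathrm{enn}})$ is of type $(i,i)$ — is inherited immediately from the corresponding statement for $\Hi{i}(Q[2])$ (Lemma~2 and Equation~(1) of Section~6 of \cite{bergvall_gd}), since taking $G$-invariants preserves the Hodge type.

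The main obstacle is the correct identification of the character $\chi_{\mathrm{enn}}$. The ennead stabilizer is the most exotic of the maximal subgroups treated in this paper and, in contrast to all the other cases handled above, the nonvanishing of $\Hi{3}(Q_{\mathrm{enn}})$ and $\Hi{4}(Q_{\mathrm{enn}})$ shows that $\chi_{\mathrm{enn}}$ must contain several irreducible constituents of intermediate degree (in particular ones whose multiplicities in low-degree cohomology of $Q[2]$ are nonzero). If the ATLAS decomposition is not usable as stated, a fallback strategy is to compute the values of $\chi_{\mathrm{enn}}$ directly on a set of conjugacy class representatives of $\symp{6}{\FF_2}$ — either using the $\mathrm{Ree}(3)$ structure of $G$ together with the character table of $\mathrm{P}\Gamma\mathrm{L}(2,\FF_8)$ recorded in \cite{littlewood}, p.~279, or via an explicit embedding $G \hookrightarrow \symp{6}{\FF_2}$ — and then to decompose against the ATLAS character table of $\symp{6}{\FF_2}$ by means of the usual orthogonality relations.
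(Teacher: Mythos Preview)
Your proposal is correct and follows exactly the same route as the paper's proof: identify $Q_{\mathrm{enn}} = Q[2]/G$ with $G \cong \mathrm{P}\Gamma\mathrm{L}(2,\FF_8)$, read the permutation character $\mathrm{Ind}_G^{\symp{6}{\FF_2}}\mathrm{Triv}$ from the ATLAS, apply Frobenius reciprocity against Table~\ref{Qtable}, and inherit purity from $Q[2]$. The only thing the paper adds that you left implicit is the explicit decomposition $\chi_{\mathrm{enn}} = \phi_{1a} + \phi_{70a} + \phi_{84a} + \phi_{105b} + \phi_{280a} + \phi_{420a}$, taken from \cite{conwayetal}, p.~46; your fallback strategy is not needed.
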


\begin{proof}
The proof follows the same path as the second version of the proof of Theorem~\ref{bitangentthm}
and the proof of Theorem~\ref{cayleythm}. Here, we must use that the 
stabilizer subgroup $G$ of an ennead has character 
$\phi_{1a} + \phi_{70a} +\phi_{84a} + \phi_{105b}+\phi_{280a}+\phi_{420a}$ , see \cite{conwayetal}, p. 46. 
\end{proof}

A computation analogous to the proof of Corollary~\ref{ptcountcor} gives the following.

\begin{cor}
Let $q$ be a power of an odd prime number and let $\FF_q$ be a finite field with $q$ elements.
The number of pairs $(C,E)$ of a plane quartic curve $C$ and an ennead $E$ of $C$,
both defined over $\FF_q$, is 
$$q^6-3q^3+11q^2-13q+11.$$ 
\end{cor}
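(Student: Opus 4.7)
The plan is to replay, line by line, the argument used in the proof of Corollary~\ref{ptcountcor}, substituting the Betti numbers of the preceding theorem. First I would invoke Artin's comparison theorem to identify the rational de Rham cohomology with $\ell$-adic étale cohomology over $\mathbb{C}$, then spread $Q_{\mathrm{enn}}$ out to a smooth scheme over $\mathrm{Spec}(\ZZ[1/N])$ for some integer $N$; the combination of constructibility of the higher direct images, smooth base change, and the classical specialization statements transfers the dimension data together with the $(i,i)$-purity assertion of the preceding theorem to the geometric fibre over any odd-characteristic finite field $\FF_q$.

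Next, the purity of each $\Hi{i}(Q_{\mathrm{enn}})$---equivalently, the minimal purity of $Q_{\mathrm{enn}}$ in the sense of Dimca and Lehrer \cite{dimcalehrer}---implies that geometric Frobenius acts on $\mathrm{H}^k_{\text{\'et},c}(Q_{\mathrm{enn}},\QQ_{\ell})$ as the scalar $q^{k-6}$, where $6=\dim Q_{\mathrm{enn}}$. The Lefschetz trace formula then collapses the point count to the alternating sum
\begin{equation*}
|Q_{\mathrm{enn}}(\FF_q)| \;=\; \sum_{k\geq 0} (-1)^k \dim \mathrm{H}^k_{\text{\'et},c}(Q_{\mathrm{enn}},\QQ_{\ell})\cdot q^{k-6}.
\end{equation*}
Poincaré duality converts the dimensions of compactly supported cohomology back into those of ordinary cohomology, and substituting $\dim \Hi{0}=1$, $\dim \Hi{3}=3$, $\dim \Hi{4}=11$, $\dim \Hi{5}=13$ and $\dim \Hi{6}=11$ (all other Betti numbers being zero) produces exactly $q^6-3q^3+11q^2-13q+11$.

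There is no real obstacle here beyond careful bookkeeping: every substantive ingredient---smoothness of $Q_{\mathrm{enn}}$, purity of its cohomology, and the numerical dimensions---is either already established in the preceding theorem or is a formal consequence of the standard étale-cohomological machinery invoked in Corollary~\ref{ptcountcor}. The only thing to double-check is the sign and exponent convention in the Lefschetz sum, which amounts to verifying that the contribution of $t^k$ in the Poincaré polynomial translates to $(-1)^k$ times its coefficient times $q^{6-k}$, matching the stated polynomial term by term.
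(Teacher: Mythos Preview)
Your proposal is correct and follows exactly the approach the paper intends: the paper's own proof is simply the phrase ``A computation analogous to the proof of Corollary~\ref{ptcountcor} gives the following,'' and you have faithfully spelled out that analogous computation with the ennead Betti numbers substituted in. There is nothing to add.
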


 \begin{table}[htbp]
\begin{displaymath}
\resizebox{.45\vsize}{!}{$
\begin{array}{r|rrrrrrrrrr} 
\, & \phi_{1a} & \phi_{7a} & \phi_{15a} & \phi_{21a} & \phi_{21b} & \phi_{27a} & \phi_{35a} & \phi_{35b} & \phi_{56a} & \phi_{70a} \\
\hline
H^0 & 1 & \cdot & \cdot & \cdot & \cdot & \cdot & \cdot & \cdot & \cdot & \cdot \\
H^1 & \cdot & \cdot & \cdot & \cdot & \cdot & \cdot & \cdot & 1 & \cdot & \cdot \\
H^2 & \cdot & \cdot & \cdot & \cdot & \cdot & \cdot & \cdot & \cdot & \cdot & \cdot \\
H^3 & \cdot & \cdot & \cdot & 1 & \cdot & \cdot & \cdot & \cdot & \cdot & \cdot \\
H^4 & \cdot & \cdot & \cdot & \cdot & \cdot & \cdot & \cdot & \cdot & \cdot & 1 \\
H^5 & \cdot & \cdot & \cdot & \cdot & \cdot & 1 & 1 & 1 & \cdot & \cdot \\
H^6 & 1 & \cdot & 2 & \cdot & 1 & 1 & 1 & 3 & \cdot & \cdot \\
\hline
\, & \phi_{84a} & \phi_{105a} & \phi_{105b} & \phi_{105c} & \phi_{120a} & \phi_{168a} & \phi_{189a} & \phi_{189b} & \phi_{189c} & \phi_{210a} \\
\hline
H^0 & \cdot & \cdot & \cdot & \cdot & \cdot & \cdot & \cdot & \cdot & \cdot & \cdot \\
H^1 & \cdot & \cdot & \cdot & \cdot & \cdot & \cdot & \cdot & \cdot & \cdot & \cdot \\
H^2 & \cdot & \cdot & \cdot & \cdot & \cdot & \cdot & \cdot & \cdot & \cdot & 1 \\
H^3 & \cdot & \cdot & 1 & \cdot & \cdot & \cdot & 1 & \cdot & \cdot & 2 \\
H^4 & \cdot & \cdot & 2 & \cdot & 2 & 1 & 2 & 1 & \cdot & 3 \\
H^5 & 1 & 2 & 2 & 1 & 2 & 4 & 3 & 3 & 3 & 4 \\
H^6 & 5 & 1 & 1 & 4 & \cdot & 3 & 2 & 2 & 5 & 3 \\
\hline
\, & \phi_{210b} & \phi_{216a} & \phi_{280a} & \phi_{280b} & \phi_{315a} & \phi_{336a} & \phi_{378a} & \phi_{405a} & \phi_{420a} & \phi_{512a} \\
\hline
H^0 & \cdot & \cdot & \cdot & \cdot & \cdot & \cdot & \cdot & \cdot & \cdot & \cdot \\
H^1 & \cdot & \cdot & \cdot & \cdot & \cdot & \cdot & \cdot & \cdot & \cdot & \cdot \\
H^2 & \cdot & \cdot & \cdot & 1 & \cdot & \cdot & \cdot & \cdot & \cdot & \cdot \\
H^3 & 1 & \cdot & \cdot & \cdot & \cdot & \cdot & 1 & 2 & 2 & 1 \\
H^4 & 4 & \cdot & 3 & 1 & 3 & 2 & 3 & 6 & 5 & 4 \\
H^5 & 4 & 4 & 4 & 6 & 5 & 6 & 6 & 6 & 8 & 9 \\
H^6 & 1 & 6 & 3 & 6 & 1 & 6 & 4 & 2 & 6 & 6
\end{array}
$}
\end{displaymath}
\caption{The cohomology groups of the moduli space of plane quartics with level two structure as representations of $\mathrm{Sp}(6,\FF_2)$.}
\label{Qtable}
\end{table}

\begin{table}
\begin{displaymath}
\resizebox{.45\vsize}{!}{$
\begin{array}{r|rrrrrrrrrr} 
\, & \phi_{1a} & \phi_{7a} & \phi_{15a} & \phi_{21a} & \phi_{21b} & \phi_{27a} & \phi_{35a} & \phi_{35b} & \phi_{56a} & \phi_{70a} \\
\hline
H^0 & 1 & \cdot & \cdot & \cdot & \cdot & 1 & \cdot & \cdot & \cdot & \cdot \\
H^1 & \cdot & \cdot & \cdot & \cdot & \cdot & 1 & \cdot & 2 & \cdot & \cdot \\
H^2 & \cdot & \cdot & \cdot & 1 & \cdot & 1 & \cdot & 2 & \cdot & \cdot \\
H^3 & \cdot & \cdot & \cdot & 5 & \cdot & 3 & 1 & 3 & \cdot & 3 \\
H^4 & \cdot & \cdot & 1 & 7 & 1 & 8 & 7 & 9 & 8 & 17 \\
H^5 & 1 & 2 & 7 & 10 & 8 & 16 & 17 & 21 & 22 & 31 \\
H^6 & 2 & 2 & 10 & 6 & 10 & 13 & 14 & 20 & 16 & 20 \\
\hline
\, & \phi_{84a} & \phi_{105a} & \phi_{105b} & \phi_{105c} & \phi_{120a} & \phi_{168a} & \phi_{189a} & \phi_{189b} & \phi_{189c} & \phi_{210a} \\
\hline
H^0 & \cdot & \cdot & \cdot & \cdot & \cdot & \cdot & \cdot & \cdot & \cdot & \cdot \\
H^1 & \cdot & \cdot & 1 & \cdot & 1 & 1 & \cdot & \cdot & \cdot & 1 \\
H^2 & 1 & \cdot & 3 & 2 & 4 & 4 & 3 & \cdot & \cdot & 7   \\
H^3 & 5 & 1 & 11 & 8 & 15 & 16 & 19 & 3 & 5 & 25 \\
H^4 & 18 & 16 & 34 & 24 & 41 & 50 & 54 & 33 & 33 & 65 \\
H^5 & 43 & 46 & 54 & 50 & 62 & 89 & 92 & 83 & 86 & 106 \\
H^6 & 42 & 37 & 35 & 46 & 39 & 65 & 65 & 66 & 77 & 76 \\
\hline
\, & \phi_{210b} & \phi_{216a} & \phi_{280a} & \phi_{280b} & \phi_{315a} & \phi_{336a} & \phi_{378a} & \phi_{405a} & \phi_{420a} & \phi_{512a} \\
\hline
H^0 & \cdot & \cdot & \cdot & \cdot & \cdot & \cdot & \cdot & \cdot & \cdot & \cdot \\
H^1 & \cdot & \cdot & \cdot & 1 & \cdot & \cdot & \cdot & \cdot & \cdot & \cdot \\
H^2 & 2 & 1 & \cdot & 7 & \cdot & 2 & 1 & 6 & 4 & 4 \\
H^3 & 16 & 8 & 11 & 23 & 13 & 19 & 21 & 38 & 33 & 34 \\
H^4 & 60 & 42 & 64 & 74 & 73 & 79 & 89 & 122 & 114 & 130 \\
H^5 & 103 & 103 & 129 & 143 & 145 & 160 & 176 & 198 & 205 & 247 \\
H^6 & 68 & 90 & 95 & 112 & 100 & 126 & 131 & 129 & 151 & 181
\end{array}
$}
\end{displaymath}
\caption{The cohomology groups of the moduli space of plane quartics with a marked bitangent line and level two structure as representations of $\mathrm{Sp}(6,\FF_2)$.}
\label{bitangenttable}
\end{table}


\bibliographystyle{plain}

\renewcommand{\bibname}{References}

\bibliography{references} 

\begin{thebibliography}{10}

\bibitem{ahmadinezhadetal}
{Ahmadinezhad, H.}, {Cheltsov, I.}, {Park, J.}, and {Shramov, C.}
\newblock {Double Veronese cones with 28 nodes}.
\newblock arXiv:1910.10533, 2019.

\bibitem{artin}
{Artin, E.}
\newblock {\em {Geometric Algebra}}.
\newblock {Interscience Publishers, Inc.}, {1957}.

\bibitem{bakerlenmorrisonpfluegerren}
{Baker, M. and Len, Y. and Morrison, R. and Pflueger, N. and Ren, Q.}
\newblock Bitangents of tropical plane quartic curves.
\newblock {\em Math. Z.}, 282(3-4):1017--1031, 2016.

\bibitem{banwaitetal}
{Banwait, B. and Fit\'{e}, F. and Loughran, D.}
\newblock Del {P}ezzo surfaces over finite fields and their {F}robenius traces.
\newblock {\em Math. Proc. Cambridge Philos. Soc.}, 167(1):35--60, 2019.

\bibitem{bergstrombergvall}
{Bergstr\"{o}m, J. and Bergvall, O.}
\newblock The equivariant {E}uler characteristic of {$\mathcal{A}_3[2]$}.
\newblock {\em Ann. Sc. Norm. Super. Pisa Cl. Sci. (5)}, 20(4):1345--1357,
  2020.

\bibitem{bergvalllic}
{Bergvall, O.}
\newblock {\em {Cohomology of the moduli space of curves of genus three with
  level two structure}}.
\newblock Licentiate thesis, Stockholms Universitet, 2014.

\bibitem{bergvallthesis}
{Bergvall, O.}
\newblock {\em {Cohomology of arrangements and moduli spaces}}.
\newblock Phd thesis, Stockholms Universitet, 2016.

\bibitem{bergvalltor}
{Bergvall, O.}
\newblock {Cohomology of Complements of Toric Arrangements Associated to Root
  Systems}.
\newblock arXiv:1601.01857, 2016.

\bibitem{bergvall_gd}
{Bergvall, O.}
\newblock Equivariant cohomology of moduli spaces of genus three curves with
  level two structure.
\newblock {\em Geom. Dedicata}, 202:165--191, 2019.

\bibitem{bergvall_pts}
{Bergvall, O.}
\newblock {Equivariant Cohomology of the Moduli Space of Genus Three Curves
  with Symplectic Level Two Structure via Point Counts}.
\newblock {\em {European Journal of Mathematics}}, 6:262--320, {2020}.

\bibitem{bergvall_glp}
{Bergvall, O.}
\newblock On the cohomology of the space of seven points in general linear
  position.
\newblock {\em Res. Number Theory}, 6(4):Paper No. 48, 11, 2020.

\bibitem{bergvallgounelas}
{Bergvall, O.} and {Gounelas, F.}
\newblock {Cohomology of moduli spaces of Del Pezzo surfaces}.
\newblock arXiv:1904.10249, 2019.

\bibitem{coble}
{Coble, A.}
\newblock {\em Algebraic geometry and theta functions}.
\newblock American Mathematical Society Colloquium Publications, Vol. X.
  American Mathematical Society, Providence, R.I., 1961.
\newblock Revised printing.

\bibitem{conwayetal}
{Conway, J.H.}, {Curtis, R.T.}, {Norton, S.P.}, {Parker, R.A.}, {Wilson, R.A.},
  and {with computational assistance of Thackray, J.G.}
\newblock {\em {Atlas of Finite Groups, Maximal Subgroups and Ordinary
  Characters for Simple Groups}}.
\newblock {Oxford University Press}, {1985}.

\bibitem{das}
{Das, R.}
\newblock Arithmetic statistics on cubic surfaces.
\newblock {\em Res. Math. Sci.}, 7(3):Paper No. 23, 12, 2020.

\bibitem{dimcalehrer}
{Dimca, A. and Lehrer, G.}
\newblock {Purity and Equivariant Weight Polynomials}.
\newblock In {Lehrer, G.I.}, editor, {\em {Algebraic Groups and Lie Groups}},
  {Australian Mathematical Society Lecture Series}, pages {161--182}.
  {Cambridge University Press}, {1997}.

\bibitem{dolgachev}
{Dolgachev, I.}
\newblock {\em Classical algebraic geometry}.
\newblock Cambridge University Press, Cambridge, 2012.
\newblock A modern view.

\bibitem{dolgachevortland}
{Dolgachev, I.} and {Ortland, D.}
\newblock {Point Sets in Projective Spaces}.
\newblock {\em {Ast{\'e}risque}}, 165:{1--210}, {1988}.

\bibitem{dye70}
{Dye, R.}
\newblock Maximal subgroups of index 960 of the group of the bitangents.
\newblock {\em J. London Math. Soc. (2)}, 2:746--748, 1970.

\bibitem{dye77}
{Dye, R.}
\newblock Partitions and their stabilizers for line complexes and quadrics.
\newblock {\em Ann. Mat. Pura Appl. (4)}, 114:173--194, 1977.

\bibitem{dye83b}
{Dye, R.}
\newblock A maximal subgroup of {${\rm PSp}_{6}(2^{m})$} related to a spread.
\newblock {\em J. Algebra}, 84(1):128--135, 1983.

\bibitem{dye83}
{Dye, R.}
\newblock Maximal subgroups of symplectic groups stabilizing spreads.
\newblock {\em J. Algebra}, 87(2):493--509, 1984.

\bibitem{edge63}
{Edge, W.}
\newblock An orthogonal group of order {$2^{13}\cdot 3^{5}\cdot 5^{2} \cdot
  7$}.
\newblock {\em Ann. Mat. Pura Appl. (4)}, 61:1--95, 1963.

\bibitem{edge73}
{Edge, W.}
\newblock An operand for a group of order {$1512$}.
\newblock {\em J. London Math. Soc. (2)}, 7:101--110, 1973.

\bibitem{elsenhansjahnel}
{Elsenhans, A.-S. and Jahnel, J.}
\newblock On plane quartics with a {G}alois invariant {C}ayley octad.
\newblock {\em Eur. J. Math.}, 5(4):1156--1172, 2019.

\bibitem{frame}
{Frame, J.}
\newblock The classes and representations of the groups of {$27$} lines and
  {$28$} bitangents.
\newblock {\em Ann. Mat. Pura Appl. (4)}, 32:83--119, 1951.

\bibitem{freudenthal}
{Freudenthal, H}.
\newblock {\em Oktaven, {A}usnahmegruppen und {O}ktavengeometrie}.
\newblock Mathematisch Instituut der Rijksuniversiteit te Utrecht, Utrecht,
  1951.

\bibitem{grossharris}
{Gross, B.} and {Harris, J.}
\newblock {On Some Geometric Constructions Related to Theta Characteristics}.
\newblock In {Hida, H.}, {Ramakrishnan, D.}, and {Shahidi, F.}, editors, {\em
  {Contributions to Automorphic Forms, Geometry \& Number Theory, A Volume in
  Honor of Joseph A. Shalika}}, pages {279--311}. {The Johns Hopkins University
  Press}, {2004}.

\bibitem{owr}
{Harman, N. and Djament, A. and Pagaria, R. and Miller, J. and Chen, W. and
  Wolfson, J. and Yoshinaga, M. and Miller, A. and Denham, G. and Petersen, D.
  and Falk, M.}
\newblock {Problem Session}.
\newblock In {Graham Denham}, {Giovanni Gaiffi}, {Rita Jim\'enez Rolland}, and
  {Alexander Suciu}, editors, {\em {Topology of Arrangements and Representation
  Stability}}, {Mathematisches Forschungsinstitut Oberwolfach Report No.
  2/2018}, {2018}.

\bibitem{jacobi}
{Jacobi, C. G. J.}
\newblock Beweis des {S}atzes da\ss eine {C}urve n{$^{ten}$} {G}rades im
  {A}llgemeinen {$1/2n(n-2)(n^2-9)$} {D}oppeltangenten hat.
\newblock {\em J. Reine Angew. Math.}, 40:237--260, 1850.

\bibitem{larsonvogt}
{Larson, H. and Vogt, I.}
\newblock An enriched count of the bitangents to a smooth plane quartic curve.
\newblock {\em Res. Math. Sci.}, 8(2):Paper No. 26, 21, 2021.

\bibitem{littlewood}
{Littlewood, D.}
\newblock {\em The theory of group characters and matrix representations of
  groups}.
\newblock AMS Chelsea Publishing, Providence, RI, 2006.
\newblock Reprint of the second (1950) edition.

\bibitem{loghrantrepalin}
{Loughran, D. and Trepalin, A.}
\newblock Inverse {G}alois problem for del {P}ezzo surfaces over finite fields.
\newblock {\em Math. Res. Lett.}, 27(3):845--853, 2020.

\bibitem{manivel}
{Manivel, L.}
\newblock Configurations of lines and models of {L}ie algebras.
\newblock {\em J. Algebra}, 304(1):457--486, 2006.

\bibitem{milleretal}
{Miller, G. and Blichfeldt, H. and Dickson, L.}
\newblock {\em Theory and applications of finite groups}.
\newblock John Wiley \& Sons, Inc., New York, 1916.

\bibitem{plucker}
{Pl\"{u}cker, J.}
\newblock Solution d'une question fondamentale concernant la th\'{e}orie
  g\'{e}n\'{e}rale des courbes.
\newblock {\em J. Reine Angew. Math.}, 12:105--108, 1834.

\bibitem{study}
{Study, E.}
\newblock {Gruppen zweiseitigen Kollineationen}.
\newblock In {\em {Nachrichten van der K. Gesellschaft der Wiss. zu
  G\"ottingen)}}, pages {433--479}. {Math-phys Klasse}, {1912}.

\bibitem{tommasi}
{Tommasi, O.}
\newblock {Cohomology of the moduli space of smooth plane quartic curves with
  an odd theta characteristic}.
\newblock arXiv:1002.3863, 2010.

\end{thebibliography}
\end{document}